\definecolor{darkblue}{rgb}{0, 0, .4}
\definecolor{grey}{rgb}{.7, .7, .7}
  \newcommand{\href}[2]{#2}
  \newcommand{\url}[2]{#2}
\newtheorem{theorem}{Theorem}[section]
\newtheorem{lemma}[theorem]{Lemma}
\theoremstyle{definition}
\newtheorem{definition}[theorem]{Definition}
\newtheorem{example}[theorem]{Example}
\newtheorem{openproblem}[theorem]{Open Problem}
\newtheorem{question}[theorem]{Question}
\newtheorem*{remark}{Remark}
\numberwithin{equation}{section}
\theoremstyle{theorem}
\newtheorem{corollary}[theorem]{Corollary}
\newtheorem{proposition}[theorem]{Proposition}
\newtheorem{conjecture}[theorem]{Conjecture}
\newcommand{\N}[0]{\mathbb{N}}
\newcommand{\Z}[0]{\mathbb{Z}}
\newcommand{\R}[0]{\mathbb{R}}
\newcommand{\Cat}[0]{\mathsf{Cat}}
\newcommand{\row}{\rightarrow}
\newcommand{\lf}{\lfloor}
\newcommand{\rf}{\rfloor}
\newcommand{\tld}{\widetilde}
\newcommand{\lam}{\lambda}
\newcommand{\calW}{\mathcal{W}} 
\newcommand{\calL}{\mathcal{L}} 
\newcommand{\Shi}{\ensuremath{\mathsf{Shi}}}
\newcommand{\stat}{\ensuremath{\mathsf{stat}}}
\newcommand{\maj}{\ensuremath{\mathsf{maj}}}
\newcommand{\qbinom}[2]{\genfrac{[}{]}{0pt}{}{#1}{#2}_q}
\newcommand{\qqbinom}[2]{\genfrac{[}{]}{0pt}{}{#1}{#2}_{q^2}}
\renewcommand{\mod}{\ \mathrm{mod}\ }
\newcommand{\A}{A_{\circ}}
\newcommand{\e}{\varepsilon}
\begin{document}

\title{Results and conjectures on simultaneous core partitions}

\author{Drew Armstrong}
\address{Department of Mathematics \\ University of Miami \\ Coral Gables, FL 33146}
\email{\href{mailto:armstrong@math.miami.edu}{\texttt{armstrong@math.miami.edu}}}
\urladdr{\url{http://www.math.miami.edu/\~armstrong/}}

\author{Christopher R.\ H.\ Hanusa}
\address{Department of Mathematics \\ Queens College (CUNY) \\ 65-30 Kissena Blvd. \\ Flushing, NY 11367}
\email{\href{mailto:chanusa@qc.cuny.edu}{\texttt{chanusa@qc.cuny.edu}}}
\urladdr{\url{http://qcpages.qc.edu/~chanusa/}}

\author{Brant C. Jones}
\address{Department of Mathematics and Statistics \\ James Madison University \\ Harrisonburg, VA 22807}
\email{\href{mailto:brant@math.jmu.edu}{\texttt{brant@math.jmu.edu}}}
\urladdr{\url{http://educ.jmu.edu/\~jones3bc/}}

\keywords{abacus diagram, core partition, self-conjugate, $q$-analog, $q$-Catalan number, major index statistic, Dyck path, affine Weyl group, affine permutation, hyperoctahedral group, Shi hyperplane arrangement, alcove}

\subjclass[2010]{Primary 05A17, 05A30, 05E15, 20F55; Secondary 05A10, 05A15}  

\date{\today}


\begin{abstract}
An $n$-core partition is an integer partition whose Young diagram contains no hook lengths equal to $n$.  We consider partitions that are simultaneously $a$-core and $b$-core for two relatively prime integers $a$ and $b$. These are related to abacus diagrams and the combinatorics of the affine symmetric group (type $A$).   We observe that self-conjugate simultaneous core partitions correspond to the combinatorics of type $C$, and use abacus diagrams to unite the discussion of these two sets of objects.

In particular, we prove that $2n$- and $(2mn+1)$-core partitions correspond naturally to dominant alcoves in the $m$-Shi arrangement of type~$C_n$, generalizing a result of Fishel--Vazirani for type~$A$.  We also introduce a major index statistic on simultaneous $n$- and $(n+1)$-core partitions and on self-conjugate simultaneous $2n$- and $(2n+1)$-core partitions that yield $q$-analogues of the Coxeter-Catalan numbers of type~$A$ and type~$C$.

We present related conjectures and open questions on the average size of a simultaneous core partition, $q$-analogs of generalized Catalan numbers, and generalizations to other Coxeter groups.  We also discuss connections with the cyclic sieving phenomenon and $q,t$-Catalan numbers.
\end{abstract}

\maketitle

\section{Introduction}\label{sec:intro}

Let $a$ and $b$ be coprime positive integers. In this paper we will examine integer partitions 
that are simultaneously $a$-core and $b$-core (i.e. have no cells of hook length $a$ or $b$). Jaclyn Anderson proved that the number of such ``$(a,b)$-cores" is finite and has a nice closed formula generalizing the Catalan numbers. Moreover, her proof was elegant and bijective. Since then, it has become obvious that simultaneous core partitions are at the intersection of some very interesting algebra, combinatorics, and geometry. We will state some intriguing conjectures about $(a,b)$-cores, we will generalize work of Fishel and Vazirani on Shi arrangements from type $A$ to type $C$, and we will investigate a ``major index"-type statistic on simultaneous cores. Please enjoy.

The paper is organized as follows. In Section \ref{sec:co} we introduce definitions and background material on core partitions. We also state some conjectures and open problems that motivate the rest of the paper. Section~\ref{sec:abacus} introduces precise definitions of abacus diagrams, which serve as the basis for the proofs of our results.  The focus of Section~\ref{sec:geometry} is alcoves in $m$-Shi arrangements of types $A$ and $C$.  The key result is Theorem~\ref{thm:alcove}, which characterizes $m$-minimal and $m$-bounded regions as a simultaneous core condition, generalizing the result of Fishel and Vazirani \cite{FV1} through a unified method.  Theorem~\ref{t:qA} gives a major index statistic on simultaneous core partitions to find a $q$-analog of the Catalan numbers of type $A$ and type $C$ using abacus diagrams and their bijection with lattice paths; this is the main goal of Section~\ref{sec:maj}.  We conclude in Section~\ref{sec:fq} with a few more open problems motivated by this paper.

\section{Background and Conjectures}\label{sec:co}

A {\bf partition} of the integer $n\in\N$ is an unordered multiset of positive integers $\lambda_1\geq \lambda_2\geq \cdots \geq \lambda_k>0$ such that $\sum_{i=1}^k \lambda_i=n$. We will write this as $\lambda=(\lambda_1,\lambda_2,\ldots,\lambda_k)\vdash n$, and say that the {\bf size} of the partition is $n$ and the {\bf length} of the partition is $k$. We will often associate a partition $\lambda$ with its {\bf Young diagram}, which is an array of boxes aligned up and to the left, placing $\lambda_i$ boxes in the $i$-th row from the top. For example, Figure \ref{fig:young_diagram} shows the Young diagram for the partition $(5,4,2,1,1)\vdash 13$.
\begin{figure}[htb]
\begin{center}
\includegraphics[scale=1]{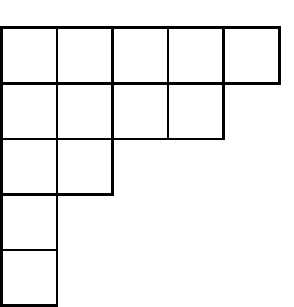}
\end{center}
\label{fig:young_diagram}
\caption{A Young diagram}
\end{figure}

To each box $B\in\lambda$ in the partition we associate its {\bf hook length} $h(B)$, which is the number of cells directly below and directly to the right of $B$ (including $B$ itself). For example, in Figure \ref{fig:58core} we have labeled each box with its hook length. An example hook of length $6$ has been shaded.

\begin{figure}[htb]
\begin{center}
\includegraphics[scale=1]{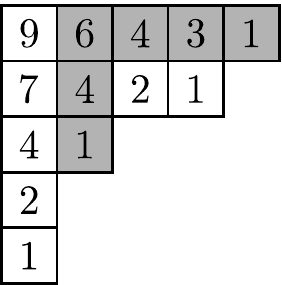}
\end{center}
\caption{A $(5,8)$-core}
\label{fig:58core}
\end{figure}

We say that an integer partiton $\lambda\vdash n$ is {\bf $a$-core} if it has no boxes of hook length $a$. The reason for the name ``$a$-core" is as follows. If a box $B\in\lambda$ has hook length $h(B)=a$, then we could also say that $\lambda$ has a {\bf rim $a$-hook} consisting of the cells along the boundary of $\lambda$, traveling between the box furthest below $B$ and the box furthest to the right of $B$. The boxes of this rim hook can then be stripped away to create a smaller partition. For example, in Figure \ref{fig:58core_strip} we have stripped away the rim $6$-hook from our previous example.

\begin{figure}[htb]
\begin{center}
\includegraphics[scale=1]{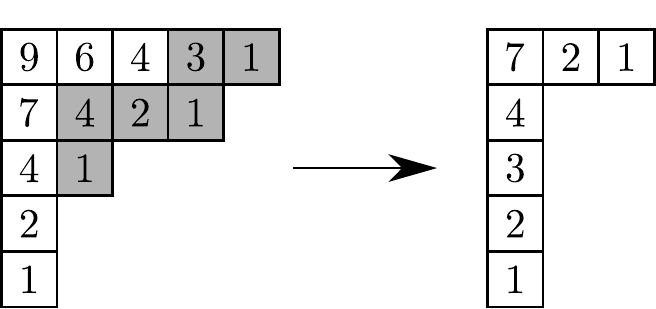}
\end{center}
\caption{Removing a rim $6$-hook}
\label{fig:58core_strip}
\end{figure}

If we continue stripping away rim $a$-hooks from $\lambda$ we eventually arrive at a partition $\tilde{\lambda}$ that has no boxes of hook length $a$. The resulting $\tilde{\lambda}$ is called {\em the} $a$-core of $\lambda$. For example, we see above that $(3,1,1,1,1)\vdash 7$ is the $6$-core of $(5,4,2,1,1)\vdash 13$. Thus it makes sense to say that a partition $\lambda$ is ``$a$-core" when it is equal to its own $a$-core.

However, it is not obvious from the above construction that the $a$-core of a partition is well-defined. We must show that the resulting partition is independent of the order in which we remove rim $a$-hooks. This at first seems difficult, but there is a beautiful argument of James and Kerber  \cite[Lemma 2.7.13]{JK} that makes it easy. In order to explain this we introduce the {\bf abacus} notation for integer partitions.

First note that we can encode an integer partition as an infinite binary string beginning with $0$s and ending with $1$s. To do this we think of the partition sitting in an infinite corner. Then we replace vertical steps by $0$s and horizontal steps by $1$s. For example, our favorite partition $(5,4,2,1,1)\vdash 13$ yields the string $\cdots 00100101101011\cdots$.
This {\bf boundary string} contains useful information. For example, the boxes of the partition are in bijection with {\bf inversions} in the string, i.e., pairs of symbols in which $1$ appears to the left of $0$. Furthermore, boxes with hook length $a$ correspond to inversions of ``length $a$" (i.e., with the $1$ and $0$ separated by $a-1$ intervening symbols). Using this language we see that the removal of a rim $a$-hook corresponds to converting an inversion of length $a$ into a non-inversion of length $a$. For example, in Figure \ref{fig:boundary_word} we have replaced the substring $1011010$ by $0011011$.

\begin{figure}[htb]
\begin{center}
\includegraphics[scale=1]{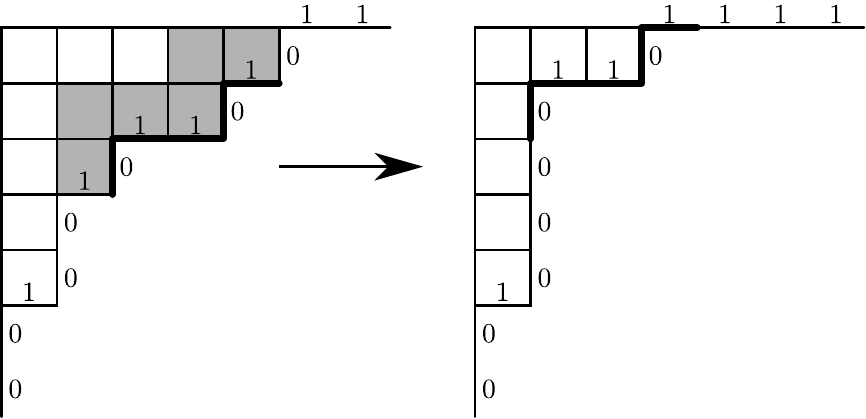}
\end{center}
\caption{Removing a rim hook $=$ removing an inversion}
\label{fig:boundary_word}
\end{figure}

Finally, we can wind the boundary string around a cycle of length $a$ to obtain an {\bf abacus diagram}. Here we read the boundary string from left to right and then proceed to the next row below. We think of the columns as {\bf runners}, the $0$s as {\bf beads}, and the $1$s as {\bf gaps}. In this language, the removal of a rim $a$-hook corresponds to {\em sliding a bead up one level into a gap}, as shown in Figure \ref{fig:abacus_diagram}.

\begin{figure}[htb]
\begin{center}
\includegraphics[scale=1]{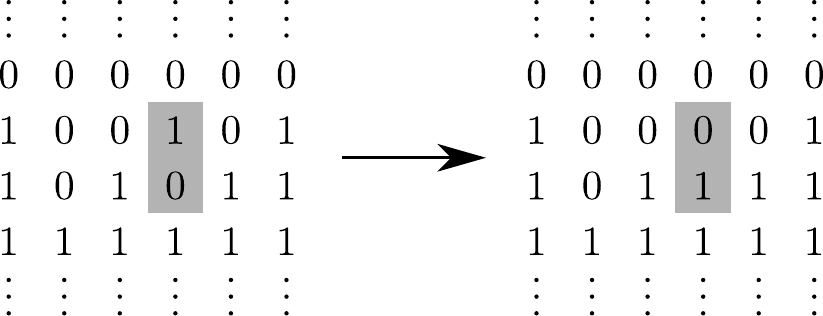}
\end{center}
\caption{Removing an inversion $=$ sliding a bead}
\label{fig:abacus_diagram}
\end{figure}

We see from this that $a$-cores are well-defined: we simply push all the beads up on their runners until there are no more gaps and we say that the abacus diagram is now {\bf $a$-flush}.

Now we turn to the main subject of the current paper: {\bf simultaneous} core partitions. We say that an integer partition $\lambda\vdash n$ is {\bf $(a,b)$-core} if it is simultaneously $a$-core and $b$-core. Our primary interest in $(a,b)$-cores is motivated by the following result of Jaclyn Anderson from 2002.

\begin{theorem}\cite{Anderson}
The total number of $(a,b)$-core partitions is finite if and only if $a$ and $b$ are coprime, in which case the number is
\begin{equation}\label{eq:catalan}
\frac{1}{a+b}\binom{a+b}{a,b}=\frac{(a+b-1)!}{a!\,b!}.
\end{equation}
\end{theorem}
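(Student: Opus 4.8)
The plan is to work entirely with the abacus encoding introduced above. I identify a partition $\lambda$ with its bi-infinite boundary string and record the positions of its beads as a set $B\subseteq\Z$, normalized so that $B$ contains every sufficiently negative integer and omits every sufficiently positive one; this balanced normalization makes $\lambda\leftrightarrow B$ a bijection onto such bead sets. The key translation is that pushing a bead up a runner is exactly the move $x\mapsto x-a$, so $\lambda$ is $a$-flush (that is, $a$-core) precisely when $B$ is closed under subtracting $a$, and likewise for $b$. Hence $\lambda$ is $(a,b)$-core if and only if $B$ is closed under subtracting both $a$ and $b$, equivalently the gap set $G=\Z\setminus B$ is closed under adding any element of the numerical semigroup $\langle a,b\rangle=\{ma+nb:m,n\geq 0\}$.

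First I would dispose of the non-coprime direction, which is the easier half. If $d=\gcd(a,b)>1$, then every $d$-core partition is automatically $(a,b)$-core: a $d$-core has no hook length divisible by $d$ (a standard fact), and both $a$ and $b$ are multiples of $d$, so in particular $\lambda$ has no hook of length $a$ or $b$. Since there are infinitely many $d$-cores for $d\geq 2$ (already the staircases $(k,k-1,\dots,1)$, whose hook lengths are all odd, give infinitely many when $d=2$), the count is infinite. This leaves the coprime case, where I must produce the exact formula \eqref{eq:catalan}.

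For coprime $a,b$ I would exploit that $\langle a,b\rangle$ is a genuine numerical semigroup: its complement in $\N$ is finite, with Frobenius number $ab-a-b$ and exactly $(a-1)(b-1)/2$ gaps. Because $G$ must be closed under adding elements of $\langle a,b\rangle$ while containing a whole upper tail of $\Z$ and omitting a whole lower tail, the partition is pinned down by a finite choice, and the balanced bead sets satisfying both closure conditions biject naturally with the order ideals of the poset $P_{a,b}$ on the semigroup gaps, ordered by $s\preceq t\iff t-s\in\langle a,b\rangle$. These order ideals are in turn in bijection with lattice paths from one corner of the $a\times b$ rectangle to the opposite corner that stay weakly below the diagonal, the path tracing the boundary between an ideal and its complement. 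Establishing this dictionary cleanly---so that the two core conditions correspond exactly to the sub-diagonal path condition, with the normalization arranged so that nothing is over- or under-counted---is the step I expect to demand the most care.

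The final step is to count these sub-diagonal lattice paths. Here I would invoke the cycle lemma of Dvoretzky and Motzkin: among the $\binom{a+b}{a}$ arrangements of $a$ up-steps and $b$ right-steps, group them into orbits under cyclic rotation of the step sequence, and show that coprimality of $a$ and $b$ forces each orbit to have full size $a+b$ and to contain exactly one path meeting the sub-diagonal condition. This yields $\frac{1}{a+b}\binom{a+b}{a}=\frac{(a+b-1)!}{a!\,b!}$, matching \eqref{eq:catalan}. Note that coprimality is precisely what makes the cycle-lemma orbits regular and produces the factor $\frac{1}{a+b}$, so this step also re-confirms why the clean closed form can hold only in the coprime case.
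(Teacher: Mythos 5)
Your proposal is correct and takes essentially the same route as the paper: your characterization of $(a,b)$-cores via bead sets closed under subtracting $a$ and $b$, with the gap set organized by the numerical semigroup $\langle a,b\rangle$ and its order ideals, is a repackaging of Anderson's bijection (the paper's Proposition~\ref{p:Anderson}, where the box labels $-a(i+1)+bj$ make the $a$-flush condition horizontal and the $b$-flush condition vertical), and your Dvoretzky--Motzkin cycle-lemma count is exactly the argument behind the Bizley citation. The only difference is that you fill in the two steps the paper delegates to references --- the infinitude when $\gcd(a,b)>1$ and the path count --- and both are handled correctly.
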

Note: when $a$ and $b$ are {\em not} coprime, Formula~\eqref{eq:catalan} is not necessarily even an integer.  We now sketch the idea of Anderson's proof of Proposition \ref{p:Anderson}.  Note that an integer partition $\lambda$ is completely determined by the hook lengths of the boxes in its first column.  These boxes are also in bijection with a set of beads in a certain normalized abacus diagram.  If $\lambda$ is $(a,b)$-core, then its beads must be flush simultaneously in {\em two} ways.  Anderson's construction beautifully creates a shifted abacus diagram where the $a$-flush condition is horizontal and the $b$-flush condition is vertical.

The correspondence between hook lengths in the first column and the beads of the flush abacus defines a bijection between $(a,b)$-cores and lattice paths in $\Z^2$ from $(0,0)$ to $(b,a)$, staying above the diagonal. We call these {\bf $(a,b)$-Dyck paths}. For example, Figure \ref{fig:anderson4} shows Anderson's bijection applied to our favorite partition.

\begin{figure}[htb]
\begin{center}
\includegraphics[scale=1]{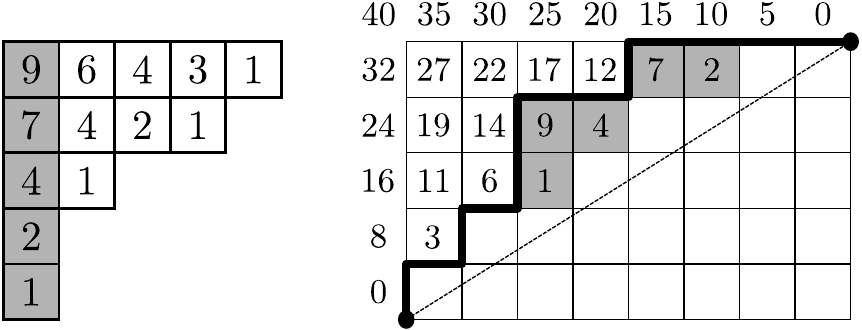}
\end{center}
\caption{Anderson's bijection}
\label{fig:anderson4}
\end{figure}

It was known since at least Bizley \cite{Bizley} that the $(a,b)$-Dyck paths (with $a$ and $b$ coprime) are counted by Formula~\eqref{eq:catalan}, which is a generalization of the classical Catalan numbers.

Next we discuss {\em self-conjugate} core partitions. Given an integer partition $\lambda=(\lambda_1,\lambda_2,\ldots,\lambda_k)\vdash n$, we define the {\bf conjugate} partition $\lambda'=(\lambda'_1,\lambda'_2,\ldots,\lambda'_\ell)\vdash n$ by setting
\begin{equation*}
\lambda'_j:=\#\{j: \lambda_j\geq i\}.
\end{equation*}
Equivalently, the Young diagram of $\lambda'$ is obtained by reflecting the Young diagram of $\lambda$ across the main diagonal. For example, Figure \ref{fig:58core_conjugate} shows that the partitions $(5,4,2,1,1)\vdash 13$ and $(5,3,2,2,1)\vdash 13$ are conjugate.

\begin{figure}[htb]
\begin{center}
\includegraphics[scale=1]{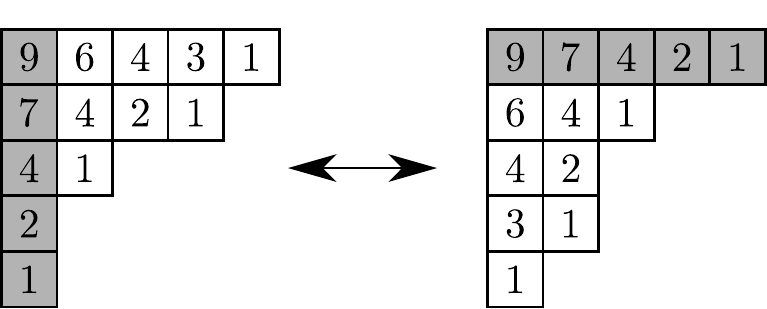}
\end{center}
\caption{Conjugation of a core partition}
\label{fig:58core_conjugate}
\end{figure}

Observe that $\lambda$ is $(a,b)$-core if and only if $\lambda'$ is $(a,b)$-core. Thus one may be interested in studying the {\bf self-conjugate} $(a,b)$-cores. In 2009, Ford, Mai and Sze proved the following analogue of Anderson's theorem.

\begin{theorem}\cite{FMS}
If $a$ and $b$ are coprime, then the number of self-conjugate $(a,b)$-cores is
\begin{equation*}
\binom{\big\lfloor\frac{a}{2}\big\rfloor+\big\lfloor\frac{b}{2}\big\rfloor}{\big\lfloor\frac{a}{2}\big\rfloor,\big\lfloor\frac{b}{2}\big\rfloor}=\frac{(\big\lfloor\frac{a}{2}\big\rfloor+\big\lfloor\frac{b}{2}\big\rfloor)!}{\big\lfloor\frac{a}{2}\big\rfloor !\, \big\lfloor \frac{b}{2}\big\rfloor !}
\end{equation*}
\end{theorem}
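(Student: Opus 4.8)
The plan is to work entirely on the abacus side, using the James--Kerber beta-set encoding already introduced, since conjugation has an especially clean description there. To each partition $\lambda$ associate its beta-set $B_\lambda=\{\lambda_i-i:i\geq 1\}\subseteq\Z$; this is the set of bead positions of the normalized (charge-zero) abacus, it contains all sufficiently negative integers and misses all sufficiently large ones. As recalled in the text, removing a rim $a$-hook slides a bead from some position $n$ into the gap at $n-a$, so $\lambda$ is $a$-core iff $B_\lambda$ is closed under subtracting $a$. Hence $\lambda$ is $(a,b)$-core iff $B_\lambda$ is closed under subtracting both $a$ and $b$, i.e. iff $B_\lambda$ is a down-set of the partial order $m\preceq n\iff n-m\in\langle a,b\rangle$ on $\Z$.

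First I would record how conjugation acts. Reflecting the Young diagram across the main diagonal reverses the boundary word and interchanges its horizontal and vertical steps, which on beta-sets is the reflect-and-complement involution $\rho\colon B\mapsto\{-1-n:n\notin B\}$; equivalently $n\in B_{\lambda'}\iff -1-n\notin B_\lambda$. (One checks this directly on small examples: it fixes $B_\emptyset=\Z_{<0}$ and swaps $B_{(2)}$ with $B_{(1,1)}$.) Therefore $\lambda$ is self-conjugate exactly when $n\in B\iff -1-n\notin B$ for all $n$: the point reflection $n\mapsto -1-n$ of $\Z$, which has no fixed point and pairs the integers as $\{k,-1-k\}$ for $k\geq 0$, must select exactly one element of $B$ from each pair.

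The heart of the argument is to fold the beta-set along this involution and count. Encoding the choice in each pair by $g(k)=[\,k\in B\,]\in\{0,1\}$ for $k\geq 0$ (which then determines $B$ on the negatives), the closure of $B$ under subtracting $a$ and $b$ translates into three families of conditions on $g$: (i) a down-set condition among the nonnegative integers, $g(k)=1\Rightarrow g(k-a)=1$ and $g(k)=1\Rightarrow g(k-b)=1$; together with the two ``crossing'' conditions coming from relations that straddle $0$, namely (ii) $g(k)=1\Rightarrow g(a-1-k)=0$ for $0\le k<a$, and (iii) $g(k)=1\Rightarrow g(b-1-k)=0$ for $0\le k<b$. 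Conditions (ii)--(iii) are precisely where self-conjugacy bites: they force $g((a-1)/2)=0$ when $a$ is odd and $g((b-1)/2)=0$ when $b$ is odd (killing the ``central runner''), and together with (i) they confine the support of $g$ to a bounded window.

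I would then show that the admissible $g$ are in bijection with the order ideals of the product of two chains $[\lfloor a/2\rfloor]\times[\lfloor b/2\rfloor]$, whose number is exactly $\binom{\lfloor a/2\rfloor+\lfloor b/2\rfloor}{\lfloor a/2\rfloor}$ (these order ideals being the monotone lattice paths in an $\lfloor a/2\rfloor\times\lfloor b/2\rfloor$ rectangle). The content of this step is that, after folding, the one-sided ``above the diagonal'' restriction responsible for the generalized-Catalan count of all $(a,b)$-cores (Anderson's theorem) disappears: the surviving data is a genuinely free pair of nested thresholds, one chain of length $\lfloor a/2\rfloor$ and one of length $\lfloor b/2\rfloor$. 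The main obstacle is making this identification precise and uniform --- choosing coordinates (via the Chinese Remainder Theorem on residues mod $a$ and mod $b$, or equivalently via the Frobenius symmetry $n\mapsto (ab-a-b)-n$ of the numerical semigroup) so that the generators of the admissible $g$ sit on an honest $\lfloor a/2\rfloor\times\lfloor b/2\rfloor$ grid, and verifying that conditions (i)--(iii) cut out exactly its order ideals. The parity cases (both $a,b$ odd versus exactly one even) must be tracked in the bookkeeping, but they enter only through the floor functions created by the forced vanishing on the central runner(s), so the final count is uniform; as a check, $(a,b)=(2n,2n+1)$ gives $\binom{2n}{n}$, the type~$C$ Coxeter--Catalan number anticipated in the abstract.
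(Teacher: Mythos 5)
Your proposal is correct and is in essence the paper's own argument (Proposition~\ref{p:FMS}) in beta-set rather than pictorial language: your reflect-and-complement involution $n\mapsto -1-n$ is exactly the antisymmetry of the balanced type-$C$ abacus, your CRT coordinatization is the paper's placement of the integer $\frac{1+b-a}{2}-ai+bj$ (for $a$ even; $\frac{1+2b-a}{2}-ai+bj$ for $a$ odd) in box $(i,j)$ so that $a$-flushness reads horizontally and $b$-flushness vertically, and your order ideals of $[\lfloor a/2\rfloor]\times[\lfloor b/2\rfloor]$ are precisely the unrestricted $N$-$E$ lattice paths to $\bigl(\lfloor b/2\rfloor,\lfloor a/2\rfloor\bigr)$ that the paper obtains by halving the $180^\circ$-rotationally symmetric dividing path. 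The step you flag as the ``main obstacle'' is exactly what those explicit grid labels accomplish, and your parity bookkeeping (the forced $g\bigl((a-1)/2\bigr)=0$, etc.) matches the paper's forced bead at $\frac{1-a}{2}$ and gap at $\frac{a+1}{2}$ for odd $a$, so your plan goes through as sketched.
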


Before discussing their proof, we  make some numerological observations. Given $k\leq n\in\N$, we define the standard {\bf $q$-integer}, {\bf $q$-factorial}, and {\bf $q$-binomial coefficient}:
\begin{align*}
[n]_q &:= 1+q+q^2+\cdots +q^{n-1},\\
[n]_q! &:= [n]_q[n-1]_q\cdots [2]_q[1]_q,\\
{n \brack k}_q &:=\frac{[n]_q!}{[k]_q!\, [n-k]_q!}.
\end{align*}
Inspired by Formula \eqref{eq:catalan} we define the {\bf rational $q$-Catalan number}:
\begin{equation}
\Cat_q(a,b):=\frac{1}{[a+b]_q}{a+b \brack a,b}_q = \frac{[a+b-1]_q!}{[a]_q!\, [b]_q!}.
\end{equation}
Observe that the case $(a,b)=(n,n+1)$ corresponds to the classical ``$q$-Catalan number" of MacMahon:
\begin{equation*}
\Cat_q(n,n+1)=\frac{1}{[n+1]_q}{2n \brack n}_q.
\end{equation*}
MacMahon proved that $\Cat_q(n,n+1)$ is in $\N[q]$ by defining a statistic on lattice paths, which is now called ``major index" (in honor of the fact that MacMahon held the rank of major in the British Army). First, note that the final step of an $(n,n+1)$-Dyck path must be horizontal, and removing this last step defines a bijection between $(n,n+1)$-Dyck paths and {\bf classical Dyck paths} from $(0,0)$ to $(n,n)$, staying {\em weakly} above the diagonal. Given a path $P$ that starts at $(0,0)$ and takes right steps and up steps---this includes classical Dyck paths---we define the {\bf major index} as follows. Begin at $(0,0)$ and call this vertex $0$. Then $\maj(P)$ is the sum of $i$ such that the step $(i-1)\to i$ is horizontal and $i\to (i+1)$ is vertical (or in other words, the $i$-th vertex is a ``valley" of the path). MacMahon proved the following.

\begin{theorem}\cite[page 214]{MacMahon}
We have
\begin{equation*}
\sum_P q^{\maj(P)} = \frac{1}{[n+1]_q}{2n \brack n}_q,
\end{equation*}
where the sum is over classical Dyck paths $P$ (equivalently, $(n,n+1)$-Dyck paths $P$).
\end{theorem}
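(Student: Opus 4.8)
The plan is to lift the classical cycle-lemma derivation of the Catalan number $\frac{1}{n+1}\binom{2n}{n}$ to the level of the major index. First I would rewrite the target in the more convenient form
\begin{equation*}
\frac{1}{[n+1]_q}{2n \brack n}_q = \frac{1}{[2n+1]_q}{2n+1 \brack n}_q,
\end{equation*}
which is an elementary $q$-factorial identity. Encoding each right step by $0$ and each up step by $1$, a classical Dyck path from $(0,0)$ to $(n,n)$ is exactly a binary word with $n$ ones and $n$ zeros every prefix of which has at least as many ones as zeros. Appending a final right step produces a word $w$ of length $2n+1$ with $n$ ones and $n+1$ zeros such that every \emph{proper} prefix has at least as many ones as zeros and which ends in $0$; call these \emph{ballot words}. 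Since a final $0$ can never be the left endpoint of a valley (a valley being a $0$ immediately followed by a $1$), this appending operation preserves $\maj$, so it suffices to evaluate $\sum_{w} q^{\maj(w)}$ over ballot words of length $2n+1$.

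\textbf{The master equidistribution.} Next I would record that, over \emph{all} binary words with $n$ ones and $n+1$ zeros,
\begin{equation*}
\sum_{w} q^{\maj(w)} = {2n+1 \brack n}_q.
\end{equation*}
This is MacMahon's theorem that the major index generating function over the permutations of a fixed binary multiset is a $q$-binomial coefficient; the valley-based version used here agrees with the usual descent-based one after complementing $0\leftrightarrow 1$, and it follows from the standard recurrence for ${2n+1 \brack n}_q$ obtained by conditioning on the last letter.

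\textbf{The $q$-cycle lemma.} The heart of the argument is a $q$-refinement of the Dvoretzky--Motzkin cycle lemma. Because $\gcd(n,2n+1)=1$, every word with $n$ ones and $n+1$ zeros is aperiodic, so $\Z/(2n+1)\Z$ acts freely and partitions these $\binom{2n+1}{n}$ words into orbits of full size $2n+1$; the cycle lemma says each orbit contains exactly one ballot word, namely the unique rotation all of whose proper prefixes stay weakly above the diagonal. I would then prove the \emph{necklace lemma}: within each orbit the $2n+1$ values of $\maj$ are consecutive integers, with minimum attained at the ballot representative, so that
\begin{equation*}
\sum_{w'\in\,\text{orbit}} q^{\maj(w')} = [2n+1]_q\,q^{\maj(\text{ballot rep})}.
\end{equation*}
Summing over orbits, the left side totals ${2n+1 \brack n}_q$ by the master equidistribution, while the right side is $[2n+1]_q$ times the generating function over ballot words; dividing by $[2n+1]_q$ and applying the appending bijection of the first paragraph then gives the theorem.

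\textbf{Main obstacle.} The delicate point is the necklace lemma. I would approach it by a direct analysis of the elementary rotation that moves the leading step to the end, tracking how the valley set is affected: surviving valleys shift by $-1$, a possible valley at the front is destroyed, and a possible valley may be created at the seam between $w_L$ and the relocated first letter. The goal of this bookkeeping is to show that across an orbit the $2n+1$ major indices are pairwise distinct modulo $2n+1$ (hence form a complete residue system) while all lying in an interval of length $2n+1$; these two facts together force the values to be an unbroken block of consecutive integers, and the cycle-lemma characterization of the ballot word pins the minimum to it. I expect this rotation analysis -- rather than the surrounding formal manipulations -- to be the part requiring the most care.
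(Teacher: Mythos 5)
Your outer reductions are all sound: the identity $\frac{1}{[n+1]_q}{2n \brack n}_q=\frac{1}{[2n+1]_q}{2n+1 \brack n}_q$, the $\maj$-preserving appending of a trailing $0$, the master equidistribution $\sum_w q^{\maj(w)}={2n+1\brack n}_q$, the aperiodicity from $\gcd(n,2n+1)=1$, and the cycle-lemma uniqueness of the ballot rotation. But the step you yourself flagged as delicate --- the necklace lemma --- is \emph{false}, and it is the heart of the argument. Take $n=3$, $N=7$, and the orbit of the ballot word $1011000$: its seven rotations $1011000$, $0110001$, $1100010$, $1000101$, $0001011$, $0010110$, $0101100$ have $\maj$ values $2,7,5,10,8,6,4$, i.e.\ the set $\{2,4,5,6,7,8,10\}$. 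These are pairwise distinct modulo $7$, as you predicted, but they are \emph{not} consecutive (both $3$ and $9$ are missing) and they span an interval of length $9$, not $7$ --- so the second half of your bookkeeping plan ("all lying in an interval of length $2n+1$") cannot be carried out. The minimality claim fails independently: in the orbit of the ballot word $1100100$ (with $\maj=4$) the values are $\{3,4,\ldots,9\}$, consecutive, but the minimum $3$ is attained at the rotation $1001100$, which is not ballot. The global identity survives only because the errors cancel \emph{across} orbits: these two orbits contribute $q^3[7]_q$ and $q^2+q^4+q^5+q^6+q^7+q^8+q^{10}$, whose sum equals the predicted $q^4[7]_q+q^2[7]_q$, but no orbit-by-orbit factorization $[2n+1]_q\cdot q^{\maj(\text{ballot rep})}$ holds.

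What survives of your lemma is exactly the congruence: over each (free) orbit, $\maj$ realizes every residue class modulo $2n+1$ once, so each orbit sum is $\equiv[2n+1]_q \pmod{q^{2n+1}-1}$. That is the standard lemma behind cyclic sieving (compare the paper's Open Problem on CSP) and it suffices for root-of-unity evaluations, but it does not determine the polynomial $\sum_{\text{ballot }w}q^{\maj(w)}$, precisely because the "window" of $\maj$ values drifts within an orbit. Note the paper itself gives no proof of this theorem --- it is cited to MacMahon, whose classical argument establishes the ballot generating function directly by a $q$-recursion (conditioning on the path structure), a route immune to this pathology. To repair your proof you would need either such an induction, or a grouping of words finer or coarser than cyclic orbits for which an exact, rather than mod-$(q^{2n+1}-1)$, factorization holds.
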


We note that the ``$q$-Catalan" numbers $\Cat_q(n,n+1)$ were studied by Furlinger and Hofbauer \cite{FurlingerHofbauer}. The following problem\footnote{Dennis Stanton, personal communication.} is open.

\begin{openproblem}\label{p:qcat}
Given $a,b\in\N$ coprime, define a statistic $\stat$ on $(a,b)$-Dyck paths, or equivalently on $(a,b)$-cores, such that
\begin{equation*}
\sum_P q^{\stat(P)} = \Cat_q(a,b) = \frac{1}{[a+b]_q}{a+b \brack a,b}_q. 
\end{equation*}
Preferably we would have $\stat=\maj$ when $(a,b)=(n,n+1)$.
\end{openproblem}

We would even like an elementary proof that $\Cat_q(a,b)$ is a polynomial in $\N[q]$ when $a,b\in\N$ are coprime. (The only known proof of this fact \cite[Section 1.12]{GordonGriffeth} uses the representation theory of rational Cherednik algebras.) We  conjecture a solution to Problem \ref{p:qcat} in Conjecture \ref{c:maj} below.

Here is another open problem.

\begin{openproblem}\label{p:sieving}
One can verify the following evaluation at $q=-1$:
\begin{equation*}
\left.\frac{1}{[a+b]_q}{a+b \brack a,b}_q\right|_{q=-1}=\binom{\big\lfloor\frac{a}{2}\big\rfloor+\big\lfloor\frac{b}{2}\big\rfloor}{\big\lfloor\frac{a}{2}\big\rfloor,\big\lfloor\frac{b}{2}\big\rfloor}.
\end{equation*}
Is this an example of a ``cyclic sieving phenomenon''? (See the survey \cite{Sagan} for details.) That is, does there exist a cyclic group action on $(a,b)$-cores such that ``rotation by $180^\circ$" corresponds to conjugation of the partition? Can one use this to view the results of Anderson and Ford-Mai-Sze as two special cases of a more general theorem?
\end{openproblem}

We  also state a related conjecture.

\begin{conjecture}
Let $a,b\in\N$ be coprime. Then the average size of an $(a,b)$-core and the average size of a self-conjugate $(a,b)$-core are both equal to
\begin{equation}\label{eq:avg}
\frac{(a+b+1)(a-1)(b-1)}{24}.
\end{equation}
Olsson and Stanton \cite{OlssonStanton} proved that there is a unique $(a,b)$-core of maximum size (which happens to be self-conjugate), and this size is
\begin{equation*}
\frac{(a^2-1)(b^2-1)}{24}.
\end{equation*}
Thus we can rephrase our conjecture by stating that the average ratio between an $(a,b)$-core and the {\em largest} $(a,b)$-core is
\begin{equation*}
\frac{(a+b+1)}{(a+1)(b+1)}.
\end{equation*}
\end{conjecture}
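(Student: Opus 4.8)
The plan is to compute the average directly as a first moment. By Anderson's bijection the $(a,b)$-cores are indexed by the order ideals $I$ of the poset $P_{a,b}$ whose ground set is the set of gaps of the numerical semigroup $\langle a,b\rangle$ (the positive integers not of the form $ax+by$ with $x,y\ge 0$), with covering relations $g\lessdot g+a$ and $g\lessdot g+b$ whenever both endpoints are gaps. Reading the first-column hook lengths of a core as such an ideal $I$, the size is the explicit but \emph{non-modular} function $|\lambda|=\sum_{g\in I} g-\binom{|I|}{2}$, where $|I|$ is exactly the number of parts of $\lambda$. (This is the combinatorial avatar of the root-lattice picture of Section~\ref{sec:geometry}: the $(a,b)$-cores are the lattice points of a dilated alcove and $|\lambda|$ is a positive-definite quadratic form vanishing at the empty core.) Since the number of cores is $C_{a,b}=\frac{1}{a+b}\binom{a+b}{a}$, it suffices to evaluate $\sum_{\text{cores}}|\lambda|$ and divide.

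First I would split the sum into its two natural pieces,
\[ \sum_{\text{cores}}|\lambda|\;=\;\sum_{g} g\,N_g\;-\;\sum_{\{g,h\}} N_{g,h}, \]
where $N_g$ (respectively $N_{g,h}$) counts the order ideals containing $g$ (respectively both $g$ and $h$). Each such count is itself an ideal-count of a smaller poset: because principal down-sets are ideals, $N_g$ is the number of order ideals of $P_{a,b}\setminus{\downarrow}g$, a skew sub-staircase, and similarly for $N_{g,h}$. These are lattice-path counts that can be attacked either by the cycle lemma---the $a+b$ cyclic rotations of a path to $(b,a)$ contain exactly one $(a,b)$-Dyck path, so the uniform measure on cores descends from the transparent uniform measure on all such paths---or by Ehrhart theory, since for fixed $a$ every quantity in sight is a quasi-polynomial in $b$. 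Carrying out the two sums and simplifying should produce a rational function of $a,b$; to identify it with $\frac{(a+b+1)(a-1)(b-1)}{24}$ I would exploit the $a\leftrightarrow b$ symmetry (as $(a,b)$- and $(b,a)$-cores coincide), anchor the top-degree term against the Olsson--Stanton maximal size $\frac{(a^2-1)(b^2-1)}{24}$, and fix the remaining coefficients by Ehrhart reciprocity or by matching finitely many values.

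The main obstacle is the quadratic term $\sum_{\{g,h\}}N_{g,h}$, equivalently the control of the \emph{second moment of the number of parts}: unlike the linear term $\sum_g g\,N_g$, it does not factor cell-by-cell, it is not visibly preserved by cyclic rotation, and it is precisely the piece responsible for the average lying strictly below $\tfrac12$ of the maximum. Pinning down its lower-order (in $b$) Ehrhart contributions sharply enough to recover the \emph{exact} polynomial, rather than just its leading behaviour, is where the real work lies. Finally, for the self-conjugate assertion I would run the whole argument again in the type~$C$ model developed in this paper, where self-conjugate $(a,b)$-cores correspond to symmetric abacus diagrams and to alcoves of type~$C_n$; this yields a second average of a quadratic form over a different simplex, and the concluding---and least automatic---step is to verify that this type~$C$ computation returns the \emph{same} value $\frac{(a+b+1)(a-1)(b-1)}{24}$, a coincidence one would ultimately like to explain by a uniform Coxeter--Catalan phenomenon rather than by two separate calculations.
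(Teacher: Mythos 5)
The statement you are trying to prove is not proved in the paper at all: it is stated as a conjecture, with the authors reporting only that it was observed experimentally, that the second author verified Formula~\eqref{eq:avg} for small values of $a$, and that only the case $(a,b)=(n,n+1)$ has since been settled (by Stanley and Zanello). So there is no paper proof to compare against, and the relevant question is whether your proposal itself constitutes a proof. It does not. Your setup is sound: the encoding of $(a,b)$-cores as order ideals $I$ of the gap poset, the size formula $|\lambda|=\sum_{g\in I}g-\binom{|I|}{2}$, the first-moment decomposition $\sum_{\mathrm{cores}}|\lambda|=\sum_g g\,N_g-\sum_{\{g,h\}}N_{g,h}$, and the identification of $N_g$ with the ideal count of $P_{a,b}\setminus{\downarrow}g$ are all correct. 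But the proof stops exactly where you say ``the real work lies'': you never evaluate $\sum_{\{g,h\}}N_{g,h}$, and you yourself concede that the cycle lemma does not transport this statistic (rotation preserves neither $|I|$ nor the pair counts) and that the quadratic term does not factor. A plan whose decisive computation is explicitly deferred is a gap, not a proof.

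Two further steps would fail as stated even if you pushed on. First, ``fix the remaining coefficients by matching finitely many values'' requires an a priori bound on the degree and quasi-period of the relevant quasi-polynomial in $b$; you assert Ehrhart behaviour for fixed $a$ but do not establish it, and even granting it, the argument proves the identity only for each fixed $a$ separately after a finite check per residue class of $b \bmod a$ --- which is precisely the ``small values of $a$'' partial progress the paper already reports, not the full conjecture, since the amount of checking grows with $a$ and no uniform mechanism is supplied. Second, the self-conjugate half is deferred wholesale to an unperformed type-$C$ computation; the symmetric abacus model of Proposition~\ref{p:FMS} gives you the indexing set (lattice points, or equivalently unrestricted paths to $(\lfloor b/2\rfloor,\lfloor a/2\rfloor)$), but the size statistic on self-conjugate cores is a different quadratic form whose average you have not begun to compute, and nothing in your outline explains why it should return the same value --- which, as the paper notes in connection with Problem~\ref{p:sieving}, is itself part of the mystery. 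In short: correct framework, honestly flagged obstacles, but the two sums that constitute the theorem are never carried out.
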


This conjecture was observed experimentally by the first author in 2011 and has been publicized informally since then.  While the second author has been able to prove Formula~\eqref{eq:avg} for small values of $a$, a framework to prove the conjecture in general has been met with frustratingly little progress. The fact that the same average size holds for both $(a,b)$-cores and self-conjugate $(a,b)$-cores makes it seem that the conjecture may be related to Problem \ref{p:sieving}. (Update: The case $(a,b)=(n,n+1)$ of the conjecture has recently been proved by Stanley and Zanello \cite{stanley-zanello}.)

Now we return to a discussion of the Ford-Mai-Sze theorem. Their proof is an ingenious bijection between self-conjugate $(a,b)$-cores and lattice paths in a $\big\lfloor\frac{a}{2}\big\rfloor\times \big\lfloor\frac{b}{2}\big\rfloor$ rectangle. But perhaps too ingenious, since it does not involve abacus diagrams, and it makes no connection to Anderson's theorem. In the current paper we  give a more natural interpretation of the Ford-Mai-Sze result using the type-$C$ abacus model recently developed by the second and third authors \cite{HJ12}. In this language we will see that there is a natural framework in which Anderson's result corresponds to the affine Weyl group of type $A$ and the Ford-Mai-Sze result corresponds to the affine Weyl group of type $C$.

We will also describe an analogous relationship between the {\bf Shi hyperplane arrangements} of types $A$ and $C$. Given a finite cystallographic root system $\Phi\subseteq V$ with positive roots $\Phi^+$, the $m$-Shi arrangement consists of the hyperplanes
\begin{equation*}
\Shi^m(\Phi) = \cup_{\alpha\in\Phi^+} \left\{ H_{\alpha,-m+1}, H_{\alpha,-m+2},\cdots , H_{\alpha,m-1}, H_{\alpha,m}\right\},
\end{equation*}
where $H_{\alpha,k}=\left\{ x\in V: \langle x,\alpha\rangle =k\right\}$. Each chamber of the $m$-Shi arrangement is a union of alcoves, which can be thought of as elements of the corresponding affine Weyl group. These alcoves, in turn, can be encoded by abacus diagrams.

Fishel and Vazirani considered the $m$-Shi arrangement of type $A_{n-1}$. They used abacus diagrams to construct a bijection between {\em minimal} alcoves in all the chambers of $\Shi^m(A_{n-1})$ and $(n,mn+1)$-core partitions \cite{FV1}, and a bijection between {\em maximal} alcoves in the {\em bounded} chambers of $\Shi^m(A_{n-1})$ and $(n,mn-1)$-core partitions \cite{FV2}. In Section \ref{sec:geometry} of this paper we use the type-$C$ abacus diagrams of Hanusa and Jones to prove the following new result: There is a bijection between {\em minimal} alcoves of the chambers of $\Shi^m(C_n)$ and self-conjugate $(2n,2mn+1)$-cores, and there is a bijection between {\em maximal} alcoves in the {\em bounded} chambers of $\Shi^m(C_n)$ and self-conjugate $(2n,2mn-1)$-cores. It is an open problem to extend the theory to types $B$ and $D$. The study of $(a,b)$-cores when $b=\pm 1 \mod{a}$ is called the ``Fuss-Catalan" level of generality. It is also an open problem to extend these results on Shi arrangements to more general $b$.

Finally, we return to the problem of $q$-Catalan numbers. For any finite reflection group $G$ with ``degrees" $d_1\leq d_2\leq \cdots \leq d_\ell=:h$, one can define a {\bf $q$-Catalan number}
\begin{equation*}
\Cat_q(G)=\prod_{i=1}^\ell \frac{[h+d_i]_q}{[d_i]_q}.
\end{equation*}
For definitions see Section 2.7 of \cite{armstrong}. It is known that $\Cat_q(G)\in\N[q]$ (see \cite{GordonGriffeth}), but combinatorial interpretations of this fact are missing in almost all cases. The (symmetric) group of type $A_{n-1}$ has degrees $2,3,\ldots,n$, and so
\begin{equation*}
\Cat_q(A_{n-1})=\frac{1}{[n+1]_q}{2n \brack n}_q,
\end{equation*}
which we discussed above. The group of type $C_n$ (the hyperoctahedral group) has degrees $2,4,6,\ldots,2n$, and so
\begin{equation*}
\Cat_q(C_n)={2n \brack n}_{q^2}.
\end{equation*}
In Section \ref{sec:maj} of this paper we will describe explicit ``major index"-type statistics on $(n,n+1)$-cores and self-conjugate $(2n,2n+1)$-cores which explain the numbers $\Cat_q(A_{n-1})$ and $\Cat_q(C_n)$. These are obtained by transfering the standard major index on lattice paths via the bijections of Anderson and Ford-Mai-Sze. The new observation is that the statistics are natural to express in the language of abacus diagrams. It is an open problem to define a similar statistic on general $(a,b)$-cores (see Problem \ref{p:qcat}). However, we will now state a conjecture that may solve the problem.

Given an $(a,b)$-core $\lambda$, we say that its {\bf $b$-boundary} consists of the skew-subdiagram of boxes with hook lengths $<b$. We define the {\bf $a$-rows} of $\lambda$ as follows. Consider the boxes in the first column of $\lambda$ and reduce their hook lengths modulo $a$. Consider the highest row in each residue class. These are the $a$-rows of the diagram.

\begin{definition}
Let $\lambda$ be an $(a,b)$-core partition with $a<b$ coprime. The {\bf skew length} $s\ell(\lambda)$ is the number of boxes of $\lambda$ that are simultaneously in the $b$-boundary and the $a$-rows of $\lambda$.
\end{definition}

For example, the partition $(7,6,2,2,2,2)\vdash 21$ is a $(7,8)$-core, and Figure \ref{fig:78core_maj} shows that its skew length is $13$.

\begin{figure}[htb]
\begin{center}
\includegraphics[scale=1]{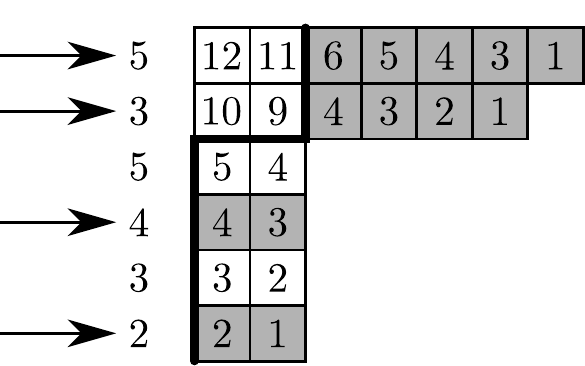}
\end{center}
\caption{The $7$-rows of the $8$-boundary of a $(7,8)$-core}
\label{fig:78core_maj}
\end{figure}

Recall that the {\bf length} of an integer partition $\ell(\lambda)$ is its number of nonzero rows. We make the following conjecture.

\begin{conjecture}\label{c:maj}
Let $a<b$ be coprime. Then we have
\begin{equation*}
\sum_{\lambda} q^{\ell(\lambda)+s\ell(\lambda)} = \frac{1}{[a+b]_q}{a+b \brack a,b}_q,
\end{equation*}
where the sum is over $(a,b)$-cores $\lambda$.
\end{conjecture}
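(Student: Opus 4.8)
The plan is to work entirely on the path side via Anderson's bijection and to separate the two features of $\Cat_q(a,b)=\frac{1}{[a+b]_q}\qbinom{a+b}{a}$: the Gaussian binomial $\qbinom{a+b}{a}$, which is the honest $q$-count of \emph{all} lattice words with $a$ north-steps and $b$ east-steps, and the prefactor $\frac{1}{[a+b]_q}$, which one expects to produce by a cycle-lemma argument. First I would rewrite the conjecture in the equivalent form
\begin{equation*}
[a+b]_q\sum_\lambda q^{\ell(\lambda)+s\ell(\lambda)} = \qbinom{a+b}{a},
\end{equation*}
and give purely path-theoretic descriptions of $\ell(\lambda)$ and $s\ell(\lambda)$: the length $\ell$ is read off from the north-steps of the Anderson path, while $s\ell$ --- the number of boxes lying in both the $b$-boundary and the $a$-rows --- should translate into a count of lattice cells in an explicit region trapped between the path and the diagonal. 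Pinning down these two translations is the necessary bookkeeping step, and is essentially the abacus computation already used to prove Theorem~\ref{t:qA} in the case $(a,b)=(n,n+1)$.

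The main engine would be a $q$-analogue of the Dvoretzky--Motzkin cycle lemma. Encode a word $w$ by the partial sums $f_i = b\cdot(\#\text{north steps among the first }i) - a\cdot(\#\text{east steps among the first }i)$; since each step changes $f$ by $+b$ or $-a$ and $b\equiv -a \pmod{a+b}$, the values $f_0,\dots,f_{a+b-1}$ form a complete residue system modulo $a+b$ (here $\gcd(a,b)=1$ is essential). Hence $\Z/(a+b)$ acts freely on words, every orbit has size $a+b$, and each orbit contains exactly one $(a,b)$-Dyck path, namely the rotation beginning at the unique index at which $(f_i)$ attains its minimum. Let $\mathrm{pos}(w)$ be the number of rotations carrying $w$ to that representative $\mathrm{Dyck}(w)$; then $\mathrm{pos}$ changes by one under a single rotation, so over each orbit it assumes each value $0,1,\dots,a+b-1$ exactly once. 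The target is therefore equivalent to the single statement
\begin{equation*}
\sum_{\text{all words }w} q^{\,\mathrm{pos}(w)+(\ell+s\ell)(\mathrm{Dyck}(w))} = \qbinom{a+b}{a},
\end{equation*}
because the left side splits over orbits as $\sum_{\text{orbits}} q^{(\ell+s\ell)}\,[a+b]_q = [a+b]_q\sum_\lambda q^{\ell+s\ell}$.

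The hard part is exactly this last identity: one must show that the word-statistic $\Psi(w):=\mathrm{pos}(w)+(\ell+s\ell)(\mathrm{Dyck}(w))$ is \emph{Mahonian}, i.e. equidistributed with the ordinary inversion/major-index statistic that produces $\qbinom{a+b}{a}$. I would attack this by rewriting $\Psi$ intrinsically --- as an inversion-type count on $w$ itself rather than through its Dyck representative --- and then building an explicit bijection on words matching $\Psi$ with $\maj$. The difficulty is genuine and is why the conjecture is open: the obvious candidates fail, e.g. for $(a,b)=(2,5)$ the valley major index on the three $(2,5)$-Dyck paths gives $1+q^2+q^3$, whereas the skew-length statistic on the three $(2,5)$-cores gives $1+q^2+q^4=\Cat_q(2,5)$.

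A second, more structural route that I would pursue in parallel is to interpret $\ell+s\ell$ as the Coxeter length of the affine permutation that the abacus attaches to $\lambda$. The rotation action then becomes an affine translation with controlled effect on length, and the whole sum becomes a Poincar\'e-type series; this is precisely the viewpoint that yields the paper's type-$A$ and type-$C$ Shi-arrangement theorems. I expect the central obstacle here to be proving the identification $\ell+s\ell = \text{(length)}$ for \emph{general} $b$, beyond the Fuss--Catalan values $b\equiv\pm 1\pmod a$ where the alcove geometry of Theorem~\ref{thm:alcove} is available; absent that geometry, the equidistribution statement of the previous paragraph seems to be the unavoidable crux.
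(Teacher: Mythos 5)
You are reviewing an attempt at Conjecture~\ref{c:maj}, and the first thing to record is that the paper offers \emph{no} proof of this statement: it is stated as a conjecture, the authors explicitly say they expect it to be difficult, and they situate it as a shadow of the $q,t$-Catalan symmetry problem (Conjecture~\ref{c:symmetry}). So there is no argument in the paper to compare yours against, and your proposal must stand on its own. As a reduction it is sound as far as it goes: since each step changes $f$ by $+b$ or $-a$ and $b\equiv -a \pmod{a+b}$, one gets $f_i\equiv -ia \pmod{a+b}$, so with $\gcd(a,a+b)=1$ the partial sums are pairwise distinct modulo $a+b$, the rotation action is free, each orbit contains a unique $(a,b)$-Dyck path (the rotation from the unique minimum), and $\mathrm{pos}$ sweeps $\{0,1,\ldots,a+b-1\}$ on each orbit; hence the conjecture is indeed equivalent to the assertion that $\Psi(w)=\mathrm{pos}(w)+(\ell+s\ell)(\mathrm{Dyck}(w))$ is Mahonian. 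Your $(2,5)$ sanity check is also correct ($1+q^2+q^4$ for $\ell+s\ell$ versus $1+q^2+q^3$ for valley $\maj$ on the three paths), and it matches the paper's own warning that $\ell+s\ell$ is not visibly related to known major-index statistics even in the $(n,n+1)$ case.

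The genuine gap is that the Mahonian claim \emph{is} the conjecture: multiplying by $[a+b]_q$ and unfolding over orbits is reversible bookkeeping, so the reduction transfers the entire difficulty into the one identity you do not prove. You supply no intrinsic formula for $\Psi$ on words, no insertion algorithm, and no candidate bijection with $\maj$ or inversions; the one concrete candidate you test fails, and the interaction between $\mathrm{pos}$ and how $\ell+s\ell$ of the Dyck representative is expressed in the rotated word is exactly where the problem lives, because $\ell+s\ell$ is not a local path statistic that behaves controllably under rotation. Your second route fares no better as stated: the obvious length identifications are numerically false --- for the $(2,5)$-core $(2,1)$, the element of $\tld{A}_1/A_1$ attached by the $2$-core abacus has Coxeter length $2$, and the element of $\tld{A}_4/A_4$ attached by the $5$-core abacus has length $3$, while $\ell+s\ell=4$ --- so any ``$\ell+s\ell$ equals a length'' statement would require a group or weighting you have not specified, and you concede that Theorem~\ref{thm:alcove} only supplies alcove geometry at the Fuss--Catalan values $b\equiv\pm 1\pmod a$. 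In short: a correct and honest framing of the problem, with accurate computations, but no proof --- the crux is named and left open, exactly as in the paper.
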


For example, the $(7,8)$-core shown above has $\ell(\lambda)+s\ell(\lambda)=6+13=19$. We might be tempted to define a statistic $\maj(\lambda)=\ell(\lambda)+s\ell(\lambda)$. {\em Unfortunately}, in the classical Catalan case of $(n,n+1)$-cores, the statistic $\ell+s\ell$ is {\bf not} clearly related to any of the known ``major index"-type statistics.

We expect Conjecture \ref{c:maj} to be difficult. In fact, it is just a shadow from the more general subject of $q,t$-Catalan combinatorics. To illustrate this, define the {\bf co-skew-length} of an $(a,b)$-core by $s\ell'(\lambda)=(a-1)(b-1)/2 - s\ell(\lambda)$.
\begin{conjecture}\label{c:symmetry}
Let $a<b$ be coprime. Then we have
\begin{equation}\label{eq:qtcat}
\sum_\lambda q^{\ell(\lambda)}t^{s\ell'(\lambda)} = \sum_\lambda t^{\ell(\lambda)} q^{s\ell'(\lambda)},
\end{equation}
where the sum is over $(a,b)$-cores $\lambda$.
\end{conjecture}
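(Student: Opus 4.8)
The conjecture asserts that the bivariate generating function $F(q,t):=\sum_{\lambda} q^{\ell(\lambda)}t^{s\ell'(\lambda)}$, summed over $(a,b)$-cores, is invariant under the interchange $q\leftrightarrow t$. This is structurally identical to the $q,t$-symmetry of the (rational) $q,t$-Catalan numbers, and the plan is to make that identification precise and then import the symmetry from that theory. In particular I expect that $F(q,t)$ is not merely symmetric but is \emph{equal} to the rational $q,t$-Catalan polynomial $\Cat_{q,t}(a,b)$; small cases already point this way, since for $(a,b)=(2,3)$ one computes $F=q+t$ and for $(2,5)$ one computes $F=q^2+qt+t^2$, matching $\Cat_{q,t}$ on the nose.

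The first step is to transfer the two statistics to the lattice-path side. Using Anderson's bijection between $(a,b)$-cores and $(a,b)$-Dyck paths—or, more conveniently, reading $\ell(\lambda)$ and $s\ell(\lambda)$ directly off the $a$-flush abacus that underlies that bijection—I would show that $F(q,t)$ coincides with Hikita's form $\sum_{P} q^{\mathrm{dinv}(P)}\,t^{\mathrm{area}(P)}$ of the rational $q,t$-Catalan polynomial, with the length $\ell$ matching $\mathrm{dinv}$ and the co-skew-length $s\ell'$ matching $\mathrm{area}$ (the roles of $q$ and $t$ may be interchanged, which is immaterial since only the symmetry is at stake). The $(2,5)$ data are consistent with this: the empty core has $(\ell,s\ell')=(0,2)$ and should correspond to the maximal-area path, with $s\ell'=\mathrm{area}$ and $\ell=\mathrm{dinv}$.

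Granting this dictionary, the desired identity $F(q,t)=F(t,q)$ becomes exactly the $q\leftrightarrow t$ symmetry of $\Cat_{q,t}(a,b)$. In the classical specialization $(a,b)=(n,n+1)$ this is the Garsia--Haglund theorem; for general coprime $(a,b)$ it follows from the rational shuffle theorem and the representation theory of rational Cherednik algebras—the same circle of ideas that already yields $\Cat_q(a,b)\in\N[q]$, cited here as \cite{GordonGriffeth}—or, conceptually, from a hard-Lefschetz/Poincar\'e-duality statement for the affine Springer fiber (compactified Jacobian) of the plane-curve singularity $x^a=y^b$, whose bigraded Betti numbers realize $\Cat_{q,t}(a,b)$.

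The main obstacle is twofold. The concrete difficulty is the dictionary of the second paragraph: the statistics $\ell$ and $s\ell$ are defined intrinsically on the core and abacus, whereas $\mathrm{dinv}$ and $\mathrm{area}$ live on the path, and matching them requires tracking Anderson's bijection carefully—indeed even the one-variable shadow $\sum_\lambda q^{\ell(\lambda)+s\ell(\lambda)}=\Cat_q(a,b)$ is only Conjecture~\ref{c:maj} and is not yet proved by elementary means. The deeper difficulty is that the symmetry itself admits no known elementary or bijective proof: producing an explicit involution $\phi$ on $(a,b)$-cores with $\ell(\phi(\lambda))=s\ell'(\lambda)$ and $s\ell'(\phi(\lambda))=\ell(\lambda)$ is precisely the kind of statistic-swapping bijection that remains open even classically, which is why I expect a self-contained combinatorial proof to be out of reach and would instead route through the machinery above. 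As partial progress one could attack the marginal obtained by setting $t=1$, namely the equidistribution $\sum_\lambda q^{\ell(\lambda)}=\sum_\lambda q^{s\ell'(\lambda)}$, a necessary consequence of the conjecture that may be accessible by a direct bijection on cores; note, however, that it cannot be proved by conjugation $\lambda\mapsto\lambda'$ alone, since in the $(2,5)$ example every core is self-conjugate yet the required pairing is $\emptyset\leftrightarrow(2,1)$ with $(1)$ fixed.
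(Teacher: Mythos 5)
First, a point of calibration: the statement you were asked to prove is presented in the paper as a \emph{conjecture}, with no proof offered; the authors explicitly describe it as a generalization of the ``symmetry problem'' for $q,t$-Catalan numbers, call that problem quite hard, and solicit even partial progress. So there is no argument of the paper to compare yours against, and your submission---candidly framed as a plan rather than a proof---must be judged as a proof attempt on its own. As contextualization it is accurate, and indeed it is the paper's own: the authors state that either side of \eqref{eq:qtcat} should be regarded as a ``rational $q,t$-Catalan number'' and point to Gorsky--Mazin \cite{GM11,GM12} and Armstrong--Loehr--Warrington \cite{ALW13} for alternative interpretations of the skew-length statistic. Your small-case computations are correct: the $(2,3)$-cores give $q+t$, the $(2,5)$-cores give $q^2+qt+t^2$, and your observation that conjugation cannot realize the required pairing (every $(2,5)$-core is self-conjugate, yet $\emptyset$ must pair with $(2,1)$) is a genuinely useful sanity check.

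However, both legs of your proposed reduction are gaps, not steps. The dictionary identifying $(\ell,s\ell')$ with $(\mathrm{dinv},\mathrm{area})$ under Anderson's bijection is nowhere established, and you cannot wave it through: its one-variable shadow, $\sum_\lambda q^{\ell(\lambda)+s\ell(\lambda)}=\Cat_q(a,b)$, is exactly Conjecture~\ref{c:maj} of this same paper, which the authors expect to be difficult---so your ``first step'' is itself at least as hard as an open conjecture of the very paper in question. And the symmetry you then hope to import was not a theorem for general coprime $(a,b)$ at the time: \cite{GordonGriffeth} proves only that the one-variable specialization $\Cat_q(a,b)$ lies in $\N[q]$; the Gorsky--Mazin papers \emph{conjecture}, rather than prove, the $q\leftrightarrow t$ symmetry of their compactified-Jacobian polynomials; and the hard-Lefschetz/Poincar\'e-duality statement you invoke for those Jacobians is itself unproven. (Only with much later machinery---Mellit's proof of the rational shuffle conjecture, whose algebraic side is manifestly $q,t$-symmetric---does the symmetry of Hikita's polynomial become available, and even then the dictionary gap remains.) In short: your route is the one experts would name, and your honesty about its status is commendable, but nothing in the proposal closes either gap, so the statement remains exactly as open as the paper leaves it. Your suggested fallback---proving the $t=1$ marginal equidistribution $\sum_\lambda q^{\ell(\lambda)}=\sum_\lambda q^{s\ell'(\lambda)}$ by a direct bijection on cores---is the right kind of partial target and would constitute real progress of the sort the paper explicitly welcomes.
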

Either side of Equation \eqref{eq:qtcat} should be regarded as a ``rational $q,t$-Catalan number". Then Conjecture \ref{c:symmetry} is a generalization of the ``symmetry problem" for $q,t$-Catalan numbers. This problem is quite hard (see, for example, \cite{haglund}). Thus, even partial progress is welcome.

To end this section we point to some related work. The ``rational $q,t$-Catalan numbers" \eqref{eq:qtcat} have been independently defined and studied by Gorsky and Mazin \cite{GM11,GM12}, and they will also appear soon in a paper of Armstrong, Loehr, and Warrington \cite{ALW13}. The paper \cite{ALW13} will explore three different interpretations of the ``skew length" statistic. (In addition to the interpretation here, the other two are due to Gorsky--Mazin and Loehr--Warrington.) Finally, the general subject of ``rational Catalan numbers" and related structures has been studied by Armstrong, Rhoades, and Williams \cite{ARW13}.

\section{Abacus diagrams}\label{sec:abacus}

An {\bf abacus diagram} (or simply {\bf abacus}) is a diagram containing $R$
columns labeled $1, 2, \ldots, R$, called {\bf runners}.  Runner $i$ contains
entries labeled by the integers $m R + i$ for each {\bf level} $m$ where
$-\infty < m < \infty$. 

In figures, we orient the runners vertically with entries increasing from left
to right, top to bottom.  Entries in the abacus diagram may be circled; such
circled elements are called {\bf beads}.  Entries that are not circled are
called {\bf gaps}.  We refer to the collection consisting of the lowest beads
in each runner as the {\bf defining beads} of the abacus.  We say that an
abacus is {\bf $j$-flush} if whenever position $e$ is a bead in the abacus we
have that $e-j$ is also a bead.  

As this construction essentially defines a labeling on the infinite binary string from Section~\ref{sec:co}, two abacus diagrams are equivalent if the infinite sequence of beads and gaps are the same (so that their entries only differ by a constant position).  Two common ways to standardize an abacus diagram are to make it {\em normalized} or {\em balanced}.  We say that an abacus is {\bf balanced} if the sum of the levels of the defining beads of the abacus is zero.  We say that an abacus is {\bf normalized} if the first gap occurs in position $0$.  (And when normalized, we may relabel runner $R$ as runner $0$ and place it on the left.)

Balanced, $n$-flush abacus diagrams with $n$ runners are in bijection with minimal length coset representatives of type $\widetilde{A}_{n-1}/A_{n-1}$ by interpreting the defining beads of the abacus (written in increasing order) as the entries in the base window of the corresponding affine permutation.  

Further, the classical argument of James \cite{JK} described in Section~\ref{sec:co} gives a bijection between the set of balanced $n$-flush abacus diagrams and the set of $n$-core partitions:  Given an abacus, we create a partition whose southeast boundary is the lattice path obtained by reading the entries of the abacus in increasing order and recording a north-step for each bead, and recording an east-step for each gap.

\begin{example}\label{ex:e21} Figure~\ref{fig:e21} shows the $4$-core partition $\lam=(3,3,1,1,1)$ and two corresponding abacus diagrams, which correspond to a $4$-flush abacus and to the element $[-4,1,6,7]$ in $\tld{A}_3/A_3$.  The abacus in the middle is both normalized (the first gap is in position $0$) and balanced (the levels of the defining beads on runners $1$ through $4$ are $0$, $1$, $1$, $-2$, which sum to $0$). On the right is the alternative method of drawing this same normalized abacus on runners $0$ through $3$. 
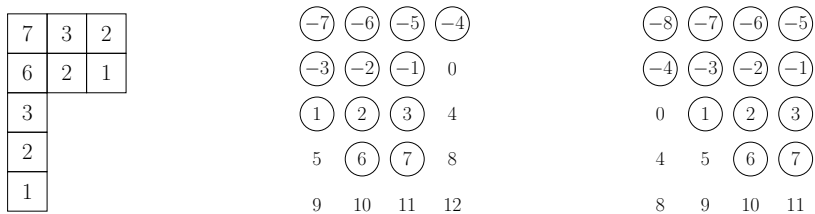
\begin{figure}[htb]
\begin{center}
\raisebox{.05in}{\scalebox{0.35}{\begin{tikzpicture}
\draw (0,-13.5cm) node [rectangle, minimum size=1.5cm, inner sep=0pt, draw, anchor=south west]  {\Huge $7$};
\draw (1.5cm,-13.5cm) node [rectangle, minimum size=1.5cm, inner sep=0pt, draw, anchor=south west]  {\Huge $3$};
\draw (3.0cm,-13.5cm) node [rectangle, minimum size=1.5cm, inner sep=0pt, draw, anchor=south west]  {\Huge $2$};
\draw (0,-15.0cm) node [rectangle, minimum size=1.5cm, inner sep=0pt, draw, anchor=south west]  {\Huge $6$};
\draw (1.5cm,-15.0cm) node [rectangle, minimum size=1.5cm, inner sep=0pt, draw, anchor=south west]  {\Huge $2$};
\draw (3.0cm,-15.0cm) node [rectangle, minimum size=1.5cm, inner sep=0pt, draw, anchor=south west]  {\Huge $1$};
\draw (0,-16.5cm) node [rectangle, minimum size=1.5cm, inner sep=0pt, draw, anchor=south west]  {\Huge $3$};
\draw (0,-18.0cm) node [rectangle, minimum size=1.5cm, inner sep=0pt, draw, anchor=south west]  {\Huge $2$};
\draw (0,-19.5cm) node [rectangle, minimum size=1.5cm, inner sep=0pt, draw, anchor=south west]  {\Huge $1$};
\end{tikzpicture}}}\qquad\qquad\qquad
\scalebox{0.3}{\begin{tikzpicture}
\draw (2cm,2cm) node [circle, minimum size=1.5cm, inner sep=0pt, draw, anchor=south west] {\Huge $-7$};
\draw (4cm,2cm) node [circle, minimum size=1.5cm, inner sep=0pt, draw, anchor=south west] {\Huge $-6$};
\draw (6cm,2cm) node [circle, minimum size=1.5cm, inner sep=0pt, draw, anchor=south west] {\Huge $-5$};
\draw (8cm,2cm) node [circle, minimum size=1.5cm, inner sep=0pt, draw, anchor=south west] {\Huge $-4$};
\draw (2cm,0cm) node [circle, minimum size=1.5cm, inner sep=0pt, draw, anchor=south west] {\Huge $-3$};
\draw (4cm,0cm) node [circle, minimum size=1.5cm, inner sep=0pt, draw, anchor=south west] {\Huge $-2$};
\draw (6cm,0cm) node [circle, minimum size=1.5cm, inner sep=0pt, draw, anchor=south west] {\Huge $-1$};
\draw (8cm,0cm) node [circle, minimum size=1.5cm, inner sep=0pt, draw=none, anchor=south west] {\Huge $0$};
\draw (2cm,-2cm) node [circle, minimum size=1.5cm, inner sep=0pt, draw, anchor=south west] {\Huge $1$};
\draw (4cm,-2cm) node [circle, minimum size=1.5cm, inner sep=0pt, draw, anchor=south west] {\Huge $2$};
\draw (6cm,-2cm) node [circle, minimum size=1.5cm, inner sep=0pt, draw, anchor=south west] {\Huge $3$};
\draw (8cm,-2cm) node [circle, minimum size=1.5cm, inner sep=0pt, draw=none, anchor=south west] {\Huge $4$};
\draw (2cm,-4cm) node [circle, minimum size=1.5cm, inner sep=0pt, draw=none, anchor=south west] {\Huge $5$};
\draw (4cm,-4cm) node [circle, minimum size=1.5cm, inner sep=0pt, draw, anchor=south west] {\Huge $6$};
\draw (6cm,-4cm) node [circle, minimum size=1.5cm, inner sep=0pt, draw, anchor=south west] {\Huge $7$};
\draw (8cm,-4cm) node [circle, minimum size=1.5cm, inner sep=0pt, draw=none, anchor=south west] {\Huge $8$};
\draw (2cm,-6cm) node [circle, minimum size=1.5cm, inner sep=0pt, draw=none, anchor=south west] {\Huge $9$};
\draw (4cm,-6cm) node [circle, minimum size=1.5cm, inner sep=0pt, draw=none, anchor=south west] {\Huge $10$};
\draw (6cm,-6cm) node [circle, minimum size=1.5cm, inner sep=0pt, draw=none, anchor=south west] {\Huge $11$};
\draw (8cm,-6cm) node [circle, minimum size=1.5cm, inner sep=0pt, draw=none, anchor=south west] {\Huge $12$};
\end{tikzpicture}}\qquad\qquad\qquad
\scalebox{0.3}{\begin{tikzpicture}
\draw (0cm,2cm) node [circle, minimum size=1.5cm, inner sep=0pt, draw, anchor=south west] {\Huge $-8$};
\draw (2cm,2cm) node [circle, minimum size=1.5cm, inner sep=0pt, draw, anchor=south west] {\Huge $-7$};
\draw (4cm,2cm) node [circle, minimum size=1.5cm, inner sep=0pt, draw, anchor=south west] {\Huge $-6$};
\draw (6cm,2cm) node [circle, minimum size=1.5cm, inner sep=0pt, draw, anchor=south west] {\Huge $-5$};
\draw (0cm,0cm) node [circle, minimum size=1.5cm, inner sep=0pt, draw, anchor=south west] {\Huge $-4$};
\draw (2cm,0cm) node [circle, minimum size=1.5cm, inner sep=0pt, draw, anchor=south west] {\Huge $-3$};
\draw (4cm,0cm) node [circle, minimum size=1.5cm, inner sep=0pt, draw, anchor=south west] {\Huge $-2$};
\draw (6cm,0cm) node [circle, minimum size=1.5cm, inner sep=0pt, draw, anchor=south west] {\Huge $-1$};
\draw (0cm,-2cm) node [circle, minimum size=1.5cm, inner sep=0pt, draw=none, anchor=south west] {\Huge $0$};
\draw (2cm,-2cm) node [circle, minimum size=1.5cm, inner sep=0pt, draw, anchor=south west] {\Huge $1$};
\draw (4cm,-2cm) node [circle, minimum size=1.5cm, inner sep=0pt, draw, anchor=south west] {\Huge $2$};
\draw (6cm,-2cm) node [circle, minimum size=1.5cm, inner sep=0pt, draw, anchor=south west] {\Huge $3$};
\draw (0cm,-4cm) node [circle, minimum size=1.5cm, inner sep=0pt, draw=none, anchor=south west] {\Huge $4$};
\draw (2cm,-4cm) node [circle, minimum size=1.5cm, inner sep=0pt, draw=none, anchor=south west] {\Huge $5$};
\draw (4cm,-4cm) node [circle, minimum size=1.5cm, inner sep=0pt, draw, anchor=south west] {\Huge $6$};
\draw (6cm,-4cm) node [circle, minimum size=1.5cm, inner sep=0pt, draw, anchor=south west] {\Huge $7$};
\draw (0cm,-6cm) node [circle, minimum size=1.5cm, inner sep=0pt, draw=none, anchor=south west] {\Huge $8$};
\draw (2cm,-6cm) node [circle, minimum size=1.5cm, inner sep=0pt, draw=none, anchor=south west] {\Huge $9$};
\draw (4cm,-6cm) node [circle, minimum size=1.5cm, inner sep=0pt, draw=none, anchor=south west] {\Huge $10$};
\draw (6cm,-6cm) node [circle, minimum size=1.5cm, inner sep=0pt, draw=none, anchor=south west] {\Huge $11$};
\end{tikzpicture}}
\end{center}
\caption{The $4$-core partition $\lam=(3,3,1,1,1)$ and two corresponding abacus diagrams from Example~\ref{ex:e21}.}
\label{fig:e21}
\end{figure}
\end{example}

\medskip
In \cite{HJ12}, the second and third authors introduced an abacus diagram model
with $R=2n$ runners to represent minimal length coset representatives of type
$\widetilde{C}_n/C_n$.  In this type-$C$ abacus model,  we use $N=2n+1$
implicit labels per row so that the linear ordering of the entries of the
abacus are given by the labels $m N + i$ for level $m \in \Z$ and runner $1
\leq i \leq 2n$.  (Under these conventions, there are no entries in any type
$C$ abacus having labels $\{m N : m \in \mathbb{Z}\}$.) 

We also impose a stricter definition of {\bf balanced} on a type-$C$
abacus---the level of the defining bead on runner $i$ is the negative of the
level of the defining bead on runner $N-i$. This imposes an antisymmetry on
type-$C$ abaci where entry $N-b$ is a bead if and only if entry $N+b$ is a gap.
Restricting James's bijection gives a bijection between the set of type-$C$
balanced $2n$-flush abacus diagrams and the set of self-conjugate $2n$-core
partitions.  Under this construction, we can then interpret the defining beads
of abacus written in increasing order as the corresponding minimal length coset
representative of $\tld{C}_n/C_n$ written in one-line notation as a mirrored
$\mathbb{Z}$-permutation, just as in type $A$.

Throughout this paper, we work in types $A$ and $C$ simultaneously by letting $N$ be the number of implicit labels used on each row of the abacus, so $N = n$ in type $A$ and $N = 2n+1$ in type $C$.  We also let $R$ be the number of runners in the abacus, so $R = n$ in type $A$ and $R = 2n$ in type $C$.  

\begin{example}\label{ex:e22}
In $\widetilde{C}_2$, consider the mirrored $\mathbb{Z}$-permutation determined by $[w(1), w(2), w(3), w(4)] = [-2, 1, 4, 7]$.  The corresponding abacus diagram is given in Figure~\ref{fig:e22} and the corresponding self-conjugate $4$-core partition is $(2,1)$.
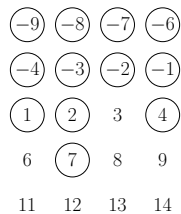
\begin{figure}[htb]
\[
\scalebox{0.3}{\begin{tikzpicture}
\draw (0cm,2cm) node [circle, minimum size=1.5cm, inner sep=0pt, draw, anchor=south west] {\Huge $-9$};
\draw (2cm,2cm) node [circle, minimum size=1.5cm, inner sep=0pt, draw, anchor=south west] {\Huge $-8$};
\draw (4cm,2cm) node [circle, minimum size=1.5cm, inner sep=0pt, draw, anchor=south west] {\Huge $-7$};
\draw (6cm,2cm) node [circle, minimum size=1.5cm, inner sep=0pt, draw, anchor=south west] {\Huge $-6$};
\draw (0cm,0cm) node [circle, minimum size=1.5cm, inner sep=0pt, draw, anchor=south west] {\Huge $-4$};
\draw (2cm,0cm) node [circle, minimum size=1.5cm, inner sep=0pt, draw, anchor=south west] {\Huge $-3$};
\draw (4cm,0cm) node [circle, minimum size=1.5cm, inner sep=0pt, draw, anchor=south west] {\Huge $-2$};
\draw (6cm,0cm) node [circle, minimum size=1.5cm, inner sep=0pt, draw, anchor=south west] {\Huge $-1$};
\draw (0cm,-2cm) node [circle, minimum size=1.5cm, inner sep=0pt, draw, anchor=south west] {\Huge $1$};
\draw (2cm,-2cm) node [circle, minimum size=1.5cm, inner sep=0pt, draw, anchor=south west] {\Huge $2$};
\draw (4cm,-2cm) node [circle, minimum size=1.5cm, inner sep=0pt, draw=none, anchor=south west] {\Huge $3$};
\draw (6cm,-2cm) node [circle, minimum size=1.5cm, inner sep=0pt, draw, anchor=south west] {\Huge $4$};
\draw (0cm,-4cm) node [circle, minimum size=1.5cm, inner sep=0pt, draw=none, anchor=south west] {\Huge $6$};
\draw (2cm,-4cm) node [circle, minimum size=1.5cm, inner sep=0pt, draw, anchor=south west] {\Huge $7$};
\draw (4cm,-4cm) node [circle, minimum size=1.5cm, inner sep=0pt, draw=none, anchor=south west] {\Huge $8$};
\draw (6cm,-4cm) node [circle, minimum size=1.5cm, inner sep=0pt, draw=none, anchor=south west] {\Huge $9$};
\draw (0cm,-6cm) node [circle, minimum size=1.5cm, inner sep=0pt, draw=none, anchor=south west] {\Huge $11$};
\draw (2cm,-6cm) node [circle, minimum size=1.5cm, inner sep=0pt, draw=none, anchor=south west] {\Huge $12$};
\draw (4cm,-6cm) node [circle, minimum size=1.5cm, inner sep=0pt, draw=none, anchor=south west] {\Huge $13$};
\draw (6cm,-6cm) node [circle, minimum size=1.5cm, inner sep=0pt, draw=none, anchor=south west] {\Huge $14$};
\end{tikzpicture}}
\]
\caption{The abacus diagram from Example~\ref{ex:e22}.}
\label{fig:e22}
\end{figure}
\end{example}

\section{Regions of the $m$-Shi arrangement} \label{sec:geometry}

Consider the root system of type $A_{n-1}$ or type $C_n$ embedded in a Euclidean space
$V = \R^n$ with inner product $(\cdot,\cdot)$ and orthonormal basis $\{\e_1,
\ldots, \e_n\}$.  Then the {\em $m$-Shi hyperplane arrangement} consists of $v
\in V$ such that 
\[ -m < (v,\alpha) \leq m \text{ for all positive roots $\alpha$. } \]

For example, the hyperplanes in type~$A_{n-1}$ consist of $v = \sum_{i=1}^n v_i \e_i$ such that
\[ v_i - v_j \in \{0, 1, \ldots, m\} \text{ for $1 \leq i < j \leq n$, } \]
while for type~$C_n$ we additionally have 
\[ v_i + v_j \in \{0, 1, \ldots, m\} \text{ for $1 \leq i < j \leq n$, and } \]
\[ 2v_i \in \{0, 1, \ldots, m\} \text{ for $1 \leq i \leq n$ }. \]
In this work, we restrict to the dominant cone $\{ v \in V : (v,\alpha) \geq 0 $ for all positive roots $\alpha\}$.
Pictures from rank $2$ are shown in Figures~\ref{f:A2} and \ref{f:C2}.  For the $A_2$ figures, we draw the restriction of the arrangement to the plane $v_1 + v_2 + v_3 = 0$ in $\R^3$.  

\begin{figure}[ht]
\begin{center}
\begin{tikzpicture}
    \pgfmathsetmacro\am{6}  
    \pgfmathsetmacro\aw{2}  
    \pgfmathsetmacro\tsx{1}  
    \pgfmathsetmacro\tsy{0.8}  

    \draw[blue] (0,0) -- (\am,0);
    \draw[blue] (0,0) -- (60:\am);
    \draw[blue] (60:\aw) -- +(\am,0);
    \draw[blue] (\aw,0) -- +(60:\am);
    \draw[blue] (60:\aw) -- (\aw,0);
    \draw[blue] (60:2*\aw) -- (2*\aw,0);
    \draw[blue] (60:2*\aw) -- +(\am,0);
    \draw[blue] (2*\aw,0) -- +(60:\am);

    \draw (0,0)+(0.5*\aw,0) node [rectangle,fill=white]{\color{blue} $s_2$};
    \draw (60:0.5*\aw) node [rectangle,fill=white]{\color{blue} $s_1$};
    \draw (30:0.88*\aw) node [rectangle,fill=white]{\color{blue} $s_0$};

    \draw (0,-0.2)+(\tsx,\tsy) node {$1\;2\;3$};
    \draw (0.5*\aw,0.4)+(\tsx,\tsy) node {$0\;2\;4$};
    \draw (\aw,-0.2)+(\tsx,\tsy) node {$\bar{1}\;3\;4$};
    \draw (1.5*\aw,0.4)+(\tsx,\tsy) node {$\bar{2}\;3\;5$};
    \draw (2*\aw,-0.2)+(\tsx,\tsy) node {$\bar{3}\;4\;5$};

    \draw (0.5*\aw,\aw-0.4)+(\tsx,\tsy) node {$0\;1\;5$};
    \draw (\aw,\aw+0.1)+(\tsx,\tsy) node {$\bar{1}\;1\;6$};
    \draw (1.5*\aw,\aw-0.4)+(\tsx,\tsy) node {$\bar{2}\;2\;6$};
    \draw (2*\aw,\aw+0.1)+(\tsx,\tsy) node {$\bar{3}\;2\;7$};
    \draw (2.5*\aw,\aw-0.4)+(\tsx,\tsy) node {$\bar{4}\;3\;7$};

    \draw (\aw,2*\aw-0.8)+(\tsx,\tsy) node {$\bar{1}\;0\;7$};
    \draw (2*\aw,2*\aw-0.8)+(\tsx,\tsy) node {$\bar{3}\;1\;8$};
    \draw (3*\aw,2*\aw-0.8)+(\tsx,\tsy) node {$\bar{5}\;2\;9$};
\end{tikzpicture}
\end{center}
\caption{The $m=2$ Shi arrangement in $A_2$}\label{f:A2}
\end{figure}
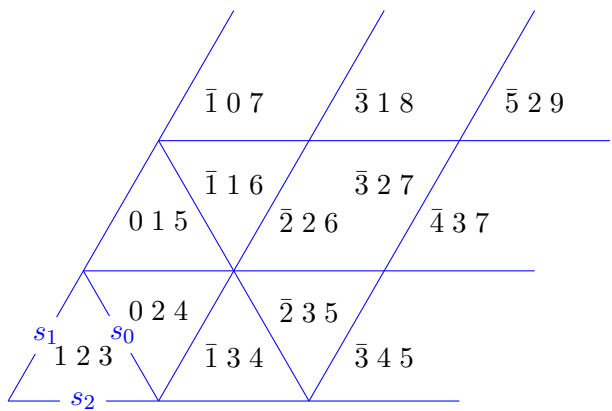

Recall from \cite[Chapter 4]{humphreys} that the affine Weyl group acts on $V$
by reflections, and so $V$ decomposes into {\bf alcoves} that are the connected
components of the complement of the set of hyperplanes orthogonal to positive
roots together with all their parallel translates.  Denote the fundamental
alcove by $\A$.  Then there is a simply transitive action of the affine Weyl
group on the set of alcoves.  In fact, there are two actions:  the left action
reflects an alcove (non-locally) across one of the hyperplanes through the
origin, while the right action reflects an alcove (locally) across one of its
bounding hyperplanes.  Then, we have that $w \A$ corresponds to a dominant
alcove if and only if $w$ is ``left grassmannian'' in the sense that $D_L(w)
\subseteq \{s_0\}$, where $D_L$ denotes the left descent set.
We label each alcove with the defining beads of the corresponding balanced
abacus, where a bar over a number represents the negative of that number.

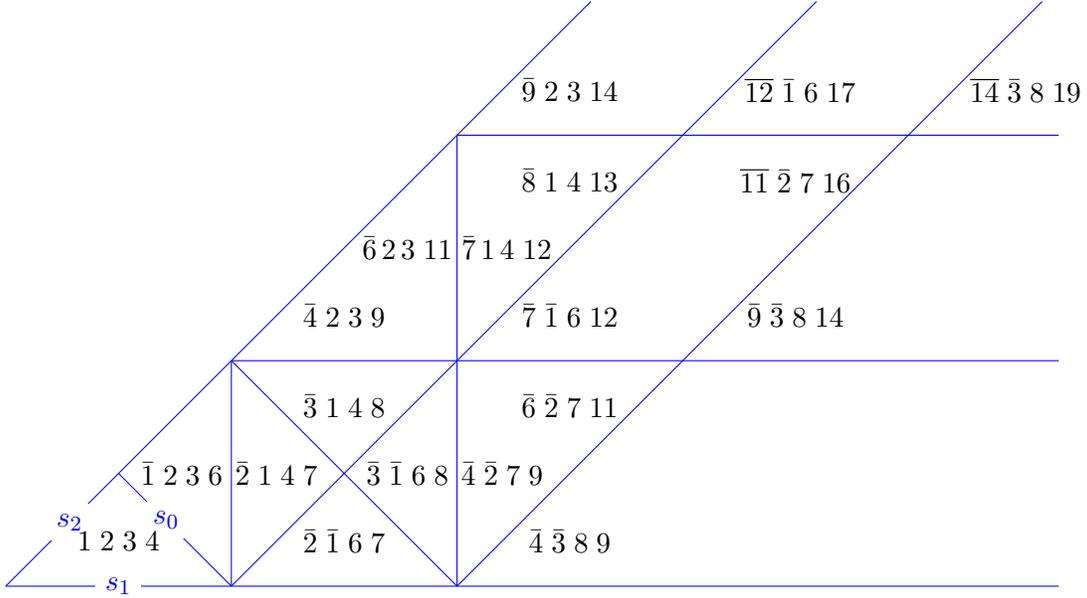
\begin{figure}[t]
\begin{center}
\begin{tikzpicture}
    \pgfmathsetmacro\am{11}  
    \pgfmathsetmacro\arw{4.24264}  
    \pgfmathsetmacro\aw{3}  
    \pgfmathsetmacro\tsx{1.5}  
    \pgfmathsetmacro\tsy{0.6}  

    \draw[blue] (0,0) -- (\am+\aw,0);
    \draw[blue] (0,0) -- (45:\am);
    \draw[blue] (45:\arw) -- +(\am,0);
    \draw[blue] (45:2*\arw) -- +(\am-\aw,0);
    \draw[blue] (\aw,0) -- +(45:\am);
    \draw[blue] (2*\aw,0) -- +(45:\am);
    \draw[blue] (\aw,0) -- +(90:\aw);
    \draw[blue] (2*\aw,0) -- +(90:2*\aw);
    \draw[blue] (\aw,0) -- +(135:0.5*\arw);
    \draw[blue] (2*\aw,0) -- +(135:\arw);

    \draw (0,0)+(0.5*\aw,0) node [rectangle,fill=white] {\color{blue} $s_1$};
    \draw (45:0.4*\aw) node [rectangle,fill=white] {\color{blue} $s_2$};
    \draw (22.5:0.77*\aw) node [rectangle,fill=white] {\color{blue} $s_0$};

    \draw (0,0)+(\tsx,\tsy) node {$1\;2\;3\;4$};
    \draw (0.28*\aw,0.9)+(\tsx,\tsy) node {$\bar{1}\;2\;3\;6$};
    \draw (0.7*\aw,0.9)+(\tsx,\tsy) node {$\bar{2}\;1\;4\;7$};
    \draw (\aw,0)+(\tsx,\tsy) node {$\bar{2}\;\bar{1}\;6\;7$};
    \draw (\aw,1.8)+(\tsx,\tsy) node {$\bar{3}\;1\;4\;8$};
    \draw (1.28*\aw,0.9)+(\tsx,\tsy) node {$\bar{3}\;\bar{1}\;6\;8$};
    \draw (1.7*\aw,0.9)+(\tsx,\tsy) node {$\bar{4}\;\bar{2}\;7\;9$};
    \draw (2*\aw,0)+(\tsx,\tsy) node {$\bar{4}\;\bar{3}\;8\;9$};
    \draw (2*\aw,1.8)+(\tsx,\tsy) node {$\bar{6}\;\bar{2}\;7\;11$};

    \draw (\aw,\aw+0)+(\tsx,\tsy) node {$\bar{4}\;2\;3\;9$};
    \draw (1.28*\aw,\aw+0.9)+(\tsx,\tsy) node {$\bar{6}\,2\,3\;11$};
    \draw (1.72*\aw,\aw+0.9)+(\tsx,\tsy) node {$\bar{7}\,1\,4\;12$};
    \draw (2*\aw,\aw+0)+(\tsx,\tsy) node {$\bar{7}\;\bar{1}\;6\;12$};
    \draw (2*\aw,\aw+1.8)+(\tsx,\tsy) node {$\bar{8}\;1\;4\;13$};

    \draw (3*\aw,\aw+1.8)+(\tsx,\tsy) node {$\overline{11}\;\bar{2}\;7\;16$};
    \draw (3*\aw,\aw)+(\tsx,\tsy) node {$\bar{9}\;\bar{3}\;8\;14$};

    \draw (2*\aw,2*\aw)+(\tsx,\tsy) node {$\bar{9}\;2\;3\;14$};
    \draw (3.02*\aw,2*\aw)+(\tsx,\tsy) node {$\overline{12}\;\bar{1}\;6\;17$};
    \draw (4.02*\aw,2*\aw)+(\tsx,\tsy) node {$\overline{14}\;\bar{3}\;8\;19$};
\end{tikzpicture}
\end{center}
\caption{The $m=2$ Shi arrangement in $C_2$}\label{f:C2}
\end{figure}

We can read off the right (local) descents and the left (non-local) inner
products of $w \A$ simultaneously from the abacus, a result of Shi \cite[Theorem~4.1]{Shi99}.

\begin{lemma} \label{l:inner_product}
Let $w(i)$ denote the $i$-th entry of the one-line notation for $w$, or
equivalently, the position of the $i$-th defining bead in the balanced abacus for $w$ in
type~$A$ or $C$.  Then the inner product of any point in $w \A$ with a positive
root $\alpha$ satisfies
\[ \bigg\lf \frac{w(j)-w(i)}{N} \bigg\rf < (w\A,\alpha) < \bigg\lf \frac{w(j)-w(i)}{N} \bigg\rf + 1, \]
for some $1 \leq i<j \leq R$.
\end{lemma}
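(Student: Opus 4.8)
The plan is to reduce the statement to the computation of a single integer—the common value of $\lfloor(v,\alpha)\rfloor$ as $v$ ranges over $w\A$—and then to evaluate that integer from the defining beads by decomposing the affine Weyl group element into a finite part and a translation.

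First I would observe that the outer assertion is automatic from the geometry. Because $w\A$ is a single alcove, it is a connected region meeting none of the hyperplanes $H_{\alpha,k}$, so the affine-linear function $v\mapsto(v,\alpha)$ is confined to one open interval $(m,m+1)$ on $w\A$, with $m=\lfloor(v,\alpha)\rfloor$ constant there. Thus the whole content of the lemma is the identity $m=\lfloor(w(j)-w(i))/N\rfloor$ for a suitable pair $i<j$. Here the choice $N=2n+1$ (rather than $R=2n$) in type $C$ is what guarantees the defining beads occupy \emph{distinct} residues modulo $N$; consequently $w(j)-w(i)$ is never divisible by $N$, the quotient is a non-integer, and the two displayed strict inequalities are exactly equivalent to $m=\lfloor(w(j)-w(i))/N\rfloor$.

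To compute $m$, I would write the element realizing the dominant alcove as $u^{-1}t_{-\lambda}\A$, where $u$ lies in the finite Weyl group and $\lambda$ in the coroot lattice, and make the dictionary of Section~\ref{sec:abacus} explicit: the increasingly ordered beads are $w(k)=u(k)+N\lambda_{u(k)}$, so that the residue $u(k)$ records the runner and $\lambda_{u(k)}$ the level of the $k$-th bead. In type $A$, evaluating $(v,\e_i-\e_j)$ for $v\in\A$ gives directly $m=\lambda_{u(j)}-\lambda_{u(i)}-[u(i)>u(j)]$, while on the arithmetic side $\lfloor(w(j)-w(i))/N\rfloor=\lambda_{u(j)}-\lambda_{u(i)}+\lfloor(u(j)-u(i))/N\rfloor$; the two agree because $u(i),u(j)$ are distinct residues in $\{1,\dots,N\}$, forcing $\lfloor(u(j)-u(i))/N\rfloor=-[u(i)>u(j)]$. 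In this case the positive root $\e_i-\e_j$ is matched to the pair $(i,j)$ of window positions, and the correspondence is a bijection with the $\binom{n}{2}$ positive roots.

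The type $C$ case proceeds along the same lines, now with $u$ a signed permutation and $\lambda$ in the type-$C$ coroot lattice, and this is where I expect the main obstacle to be. The extra positive roots $\e_i+\e_j$ and $2\e_i$ involve sums rather than differences of coordinates, and these are exactly absorbed by the antisymmetry of the balanced type-$C$ abacus, $w(k)+w(2n+1-k)=N$: a bead and its mirror carry opposite signed coordinates. I would therefore assign to window position $k\le n$ the coordinate $-\e_k$ and to $k>n$ the coordinate $+\e_{2n+1-k}$, so that $\mathrm{coord}(j)-\mathrm{coord}(i)$ runs over all positive roots as $(i,j)$ ranges over the appropriate pairs, a bead-minus-mirror-bead difference realizing $\e_i+\e_j$ and a coordinate minus its own negation realizing $2\e_i$. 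The delicate points to verify—uniformly in $N$ and $R$—are that the offset $N=2n+1$ places the floor on the correct integer for these ``sum'' roots and that the signed analogue of the inversion identity $\lfloor(u(j)-u(i))/N\rfloor=-[u(i)>u(j)]$ survives the fold. This bookkeeping is the step most prone to sign and indexing errors, and it is precisely the content of Shi's Theorem~4.1 \cite{Shi99} transported into the abacus model of \cite{HJ12}.
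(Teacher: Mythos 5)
Your proposal is correct and takes essentially the same route as the paper: the paper's proof consists of citing the antisymmetry of the balanced abacus together with \cite[Theorem 4.1]{HJ12} (Shi's formula for alcove coordinates) and remarking that the correspondence between bead-position differences and positive roots is routine to work out, and your argument simply unpacks that citation---completely in type $A$ via the finite-part-plus-translation decomposition, and in type $C$ via the mirror assignment that realizes $e_i+e_j$ and $2e_i$ as coordinate differences, before deferring the final bookkeeping to the very same theorem. Your preliminary observation that the beads occupy distinct residues modulo $N$, so that $N \nmid w(j)-w(i)$ and the two strict inequalities are equivalent to a single floor identity, is a correct clarification of why the lemma's inequalities are strict, and is consistent with how the paper uses the formula in its examples.
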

\begin{proof}
This follows from the symmetry of the abacus and \cite[Theorem 4.1]{HJ12}.
It is straightforward to work out the correspondence between differences and
positive roots.
For example, the inner product with $e_i+e_j$ in type~$C$ can be realized as
$\lf \frac{w(n+j)-w(i)}{N} \rf$.
\end{proof}

\begin{lemma}\label{l:right_descent}
In types $A$ and $C$, $w$ has $s_i$ as a right descent if and only if
the position of the defining bead in column $i+1$ is at least $N$ plus the
position of the defining bead in column $i$ (where column $0$ is interpreted as
column $2n$ in type~$C$, and column $n$ in type~$A$).
\end{lemma}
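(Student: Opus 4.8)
The plan is to reduce the claim to a single adjacent transposition of runners and to read off the resulting change in length from the abacus, in the same spirit as Lemma~\ref{l:inner_product}. Let $w$ be the group element with $w\A$ equal to the given dominant alcove, so that the labels drawn on the alcove are the defining beads of the balanced abacus of the minimal coset representative $w^{-1}$. Write $b_1,\dots,b_R$ for these defining beads indexed by runner, so that $b_k$ is the position of the defining bead in column $k$ and $b_k\equiv k \pmod N$; set $b_0:=b_R$ to encode the convention that column $0$ is read as column $2n$ in type~$C$ and column $n$ in type~$A$.

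First I would pass from right descents of $w$ to left descents of $w^{-1}$ via $s_i\in D_R(w)\iff \ell(ws_i)<\ell(w)\iff \ell(s_iw^{-1})<\ell(w^{-1})\iff s_i\in D_L(w^{-1})$. The point of this move is that the left (non-local) action of a generator is exactly the operation that is transparent on the abacus of $w^{-1}$: for $1\le i\le R-1$, left multiplication by $s_i$ interchanges the bead pattern on runner $i$ with that on runner $i+1$, shifting the beads of runner $i$ up by one position and those of runner $i+1$ down by one, while $s_0$ performs the analogous interchange at the seam between runner $R$ and runner $1$. In type~$C$ the balanced antisymmetry---the level of the defining bead on runner $k$ is the negative of that on runner $N-k$---forces this move to be carried out simultaneously on the mirror-image pair of runners, so that the output is again a balanced type-$C$ abacus; here I would invoke \cite[Theorem~4.1]{HJ12} exactly as in the proof of Lemma~\ref{l:inner_product} to guarantee that the abacus and the one-line notation transform compatibly.

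Next I would compute the sign of the length change. Since each $s_i$ is a simple reflection the length changes by exactly $\pm1$, so it suffices to decide the sign, and the interchange is a descent precisely when runner $i+1$ sits strictly higher than runner $i$. Writing the level of runner $k$ as $(b_k-k)/N$, the strict inequality between levels is, because $b_{i+1}-b_i\equiv 1\pmod N$, equivalent to $b_{i+1}-b_i\ge N$, which is the stated condition $b_{i+1}\ge N+b_i$; the residue constraint is exactly what converts the strict inequality on levels into a non-strict inequality on bead positions. The affine generator $s_0$ is handled by the same length computation applied to the pair (runner $R$, runner $1$) after accounting for the one-period shift built into $s_0$, which reproduces the condition $b_1\ge N+b_R$.

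The step I expect to be the main obstacle is fixing the sign of the length change in full generality and treating the two boundary reflections. For $s_0$ one must track the period shift carefully, since the seam contributes an extra translation and the naive level comparison no longer reads off directly; and in type~$C$ the generators at the two ends of the affine diagram act on a single runner together with its mirror image rather than on a clean pair of distinct runners, so the ``swap two runners'' picture must be replaced by the antisymmetric move. Confirming that \cite[Theorem~4.1]{HJ12} and Shi's description \cite{Shi99} deliver the same inequality $b_{i+1}\ge N+b_i$ uniformly across these degenerate cases is the technical heart of the argument; once the sign is verified for $1\le i\le R-1$ in type~$A$, the remaining cases follow from the antisymmetry of the balanced type-$C$ abacus together with this computation.
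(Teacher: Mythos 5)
Your proposal takes a genuinely different route from the paper, for the simple reason that the paper offers no argument at all: its entire proof is the citation ``This follows from \cite[Section 3.2]{HJ12},'' where the action of the Coxeter generators on abaci is worked out. What you have done is reconstruct the content hiding behind that citation: pass to $D_L(w^{-1})$, realize the left action of $s_i$ as an exchange of the bead patterns on two adjacent runners (carried out on mirrored pairs in type~$C$), and convert the resulting level comparison into a position inequality via the residue constraint $b_{i+1}-b_i\equiv 1 \pmod N$. For $1\le i\le R-1$ this is sound and complete: writing $m_k$ for the level of the defining bead $b_k$ and $k_j$ for its rank in the sorted window, one checks $u^{-1}(i)>u^{-1}(i+1)$ iff $k_i-k_{i+1}>(m_i-m_{i+1})N$, and since $\lvert k_i-k_{i+1}\rvert\le N-1$ this holds precisely when $m_{i+1}>m_i$, i.e.\ precisely when $b_{i+1}\ge N+b_i$. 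What your approach buys is an actual self-contained proof where the paper has only a pointer; what the paper's approach buys is immunity from the convention bookkeeping that your write-up partly stumbles on (note that the paper attaches the abacus to $w$ itself --- Lemma~\ref{l:inner_product} reads ``the position of the $i$-th defining bead in the balanced abacus for $w$'' --- so your insertion of $w^{-1}$ is a deviation from the paper's stated setup, though it is self-consistent and does reproduce the example, since under the usual one-line conventions $[-1,1,6]$ has unique right descent $s_0$ while its inverse $[2,4,0]$ has unique right descent $s_2$, matching the figure).

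The one place where your sketch, as written, would go wrong if executed literally is the seam. You say the $s_0$ case follows ``after accounting for the one-period shift built into $s_0$''; but if one accounts for that shift naively, taking column $0$'s bead to be $b_R-N$, the method yields $b_1\ge b_R$, which is false (it would wrongly assign an $s_0$ descent to the alcove $\bar{1}\,3\,4$ in Figure~\ref{f:A2}). The correct computation shows the shift is \emph{absorbed} by the rank term: at the seam the condition becomes $k_R-k_1>(m_R-m_1+1)N$, which with $\lvert k_R-k_1\rvert\le N-1$ forces the level difference $m_1-m_R\ge 2$ --- a threshold of $2$, not the threshold of $1$ that governs interior pairs --- and by the residue constraint this is again exactly $b_1\ge N+b_R$ with column $0$ read \emph{verbatim} as column $R$, as the lemma's parenthetical prescribes. (Sanity check: $0\,2\,4$ has $b_1=4$, $b_3=0$, and $4\ge 3+0$ correctly detects its unique descent $s_0$.) So your final stated inequality is right, but the heuristic you give for it is not; this, together with spelling out that in type~$C$ the middle generators also act on \emph{two} mirrored runner pairs simultaneously (with antisymmetry making the two resulting inequalities equivalent), is what remains to turn your plan into the proof the paper delegates to \cite{HJ12}.
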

\begin{proof}
This follows from \cite[Section 3.2]{HJ12}.
\end{proof}

As a consequence of these lemmas, it is possible to read off the ``Shi
coordinates'' of an alcove that specify the number of translations in each
positive root direction.

\begin{example}
    The alcove labeled $\bar{1}16$ in Figure~\ref{f:A2} lies 
    \begin{itemize}
        \item $\big\lf \frac{1-(-1)}{3} \big\rf = 0$ translates past the hyperplane labeled $s_1$
        \item $\big\lf \frac{6-1}{3} \big\rf = 1$ translate past the hyperplane labeled $s_2$
        \item $\big\lf \frac{6-(-1)}{3} \big\rf = 2$ translates past the hyperplane labeled $s_0$.
    \end{itemize}
    If we reflected the labeling of the boundary of $\A$ by $w$, we would find
    that $s_2$ is the unique right descent of $\bar{1}16$, corresponding to the
    fact that $6$ is the position of the defining bead in column $3$ and this
    lies at least $N=3$ positions past the defining bead in column $2$.
\end{example}

\begin{example}
    The alcove labeled $\bar{4}\bar{2}79$ in Figure~\ref{f:C2} lies 
    \begin{itemize}
        \item $\big\lf \frac{(-2)-(-4)}{5} \big\rf = 0$ translates past the hyperplane labeled $s_1$
        \item $\big\lf \frac{7-(-2)}{5} \big\rf = 1$ translate past the hyperplane labeled $s_2$
        \item $\big\lf \frac{9-(-4)}{5} \big\rf = 2$ translates past the hyperplane labeled $s_0$
        \item $\big\lf \frac{7-(-4)}{5} \big\rf = 2$ translates past the hyperplane perpendicular to the remaining positive root $e_1+e_2$.
    \end{itemize}
    The unique right descent for this element is $s_1$.
\end{example}

We say that a dominant alcove is {\bf $m$-minimal} if it is the unique alcove of minimal
length in its region of the $m$-Shi arrangement.  We say that a dominant alcove
is {\bf $m$-bounded} if it is the unique alcove of maximal length in its region of the
$m$-Shi arrangement.  The uniqueness of these alcoves was shown in \cite{Ath}.
Our main result in this section is the following theorem.  A less explicit proof for type
$A$ is given in \cite{FV1}.

\begin{theorem}\label{thm:alcove}
In types $A$ and $C$, a dominant alcove is $m$-minimal if and only if the
corresponding abacus diagram is $(Rm+1)$-flush.
Moreover, a dominant alcove is $m$-bounded if and only if the corresponding
abacus diagram is $(Rm-1)$-flush.
\end{theorem}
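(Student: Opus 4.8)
The plan is to reduce both claims to a local condition on the walls of the alcove $w\A$, and then to read that condition off the abacus using Lemmas~\ref{l:inner_product} and~\ref{l:right_descent}.

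I would first reformulate minimality and boundedness geometrically. Each region of $\Shi^m$ is convex, and by \cite{Ath} it contains a unique alcove of minimal length and, when bounded, a unique alcove of maximal length; moreover a minimal gallery between two alcoves of a single region stays inside it. Consequently $w\A$ is $m$-minimal exactly when no length-decreasing elementary step stays in its region, and $m$-bounded exactly when no length-increasing step stays in its region. A length-decreasing step crosses a lower wall of $w\A$ (a right descent) and a length-increasing step crosses an upper wall (a right ascent). Writing $k_\alpha = \lfloor (w\A,\alpha)\rfloor \ge 0$ for a positive root $\alpha$, the wall of $w\A$ perpendicular to $\alpha$ is the floor $H_{\alpha,k_\alpha}$ when it is a lower wall (so $k_\alpha \ge 1$) and the ceiling $H_{\alpha,k_\alpha+1}$ when it is an upper wall; such a wall belongs to $\Shi^m$ precisely when its level lies in $\{1,\dots,m\}$. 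Thus $w\A$ is $m$-minimal iff $k_\alpha \le m$ for every root indexing a lower wall, and $m$-bounded iff $k_\alpha \le m-1$ for every root indexing an upper wall (with the region bounded).

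Next I would convert these floor inequalities into bead inequalities. By Lemma~\ref{l:right_descent}, $s_i$ is a right descent iff the defining beads in cyclically consecutive columns differ by at least $N$, and by Lemma~\ref{l:inner_product} the floor of the corresponding root equals $\lfloor \Delta/N\rfloor$, where $\Delta$ is that defining-bead difference; the ascent case is dual. The key point is that cyclically consecutive defining beads satisfy $\Delta \equiv 1 \pmod N$, so writing $\Delta = 1 + tN$ collapses each floor or ceiling inequality to a single integer inequality: minimality becomes $t \le m$ for every consecutive pair, and boundedness becomes $t \ge -m$ for every consecutive pair. On the other hand, since the beads on each runner are upward closed, the abacus is determined by its defining beads, and being flush by $Nm+1$ amounts to $w_i - (Nm+1) \le w_{i-1}$ (cyclically), i.e. $\Delta \le Nm+1$, i.e. $t \le m$; being flush by $Nm-1$ amounts in the same way to $t \ge -m$. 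In type $A$ we have $N = R$, so these are literally the $(Rm+1)$- and $(Rm-1)$-flush conditions, and comparing the two families of inequalities proves the theorem in this case.

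The remaining work, and what I expect to be the main obstacle, is the type-$C$ bookkeeping needed to keep the single formula $Rm\pm 1$ honest when $N = 2n+1 \ne R = 2n$. Three issues must be reconciled. First, floors are computed with denominator $N$, while the flush parameter is $Rm\pm 1$; these are matched through the residue-$0$ positions that carry no entry in the type-$C$ abacus, since each block of $N$ consecutive positions contains exactly one such gap, converting a shift by $Nm\pm 1$ positions into a shift by $Rm\pm1$ occupied steps. Second, $\widetilde{C}_n$ has the extra simple reflections $s_n$ (root $2e_n$) and $s_0$, whose walls are read not from a plain consecutive-column difference but from the $e_i+e_j$ and $2e_i$ realizations indicated after Lemma~\ref{l:inner_product}. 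Third, one must check that the antisymmetry of the balanced type-$C$ abacus (entry $N-b$ a bead iff entry $N+b$ a gap) renders the inequalities produced by these extra walls either redundant with, or exactly complementary to, the consecutive-column inequalities, so that the conjunction of all wall conditions is precisely the $(Rm\pm1)$-flush condition. Verifying this compatibility—rather than the type-$A$ skeleton above—is where the argument requires genuine care, and it is the natural place for the type-$C$ abacus model of \cite{HJ12} to do the real work.
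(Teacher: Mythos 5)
Your proposal follows the same route as the paper's proof: recast $m$-minimality (resp.\ $m$-boundedness) as ``every right descent (resp.\ ascent) of $w$ crosses an $m$-Shi wall,'' read the wall level and the descent/ascent status off the abacus via Lemmas~\ref{l:inner_product} and~\ref{l:right_descent}, and identify the resulting defining-bead inequalities with the flush condition. Your type-$A$ computation is complete and correct: writing $\Delta = 1+tN$ for cyclically consecutive defining beads, minimality is $t\le m$ and boundedness is $t\ge -m$, which match $(Nm+1)$- and $(Nm-1)$-flushness. In fact your calibration of the ascent half (ascents cross ceilings, so one needs the floor at most $m-1$, equivalently inner product at most $m$) is the correct one; the paper's displayed inequality in the bounded case reads $\lfloor (w(j)-w(i))/N\rfloor > m$, which as literally written is off by one — e.g.\ $\bar{1}\,3\,4$ in $A_2$ with $m=1$ has no window pair with floor exceeding $1$ under the descending-runner congruence, yet its abacus is not $2$-flush and its region is unbounded; the correct test is floor $\geq m$, which is what your inequality $t \ge -m$ encodes.

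The genuine gap is that you prove only half the theorem: the type-$C$ statement — which is the paper's new content, type $A$ being due to Fishel--Vazirani \cite{FV1} — is left as an acknowledged to-do list. Concretely, your ``key point'' that cyclically consecutive defining beads satisfy $\Delta \equiv 1 \pmod N$ fails in type $C$ at the wrap (the labels $\{mN\}$ are skipped, so the runner-$2n$-to-runner-$1$ label difference is $\equiv 2 \pmod N$), and you never verify that the walls attached to $s_0$ and $s_n$, with their $2e_i$ and $e_i+e_j$ levels, produce the same family of inequalities. However, the work you defer to ``genuine care'' is precisely what the two cited lemmas already assert uniformly in both types, so no new argument is needed: Lemma~\ref{l:right_descent} makes the descent test for \emph{every} simple reflection of $\widetilde{C}_n$ — including $s_0$ (column $0$ read as column $2n$) and $s_n$ — an adjacent-column condition on the mirrored abacus, and Lemma~\ref{l:inner_product} realizes every positive-root level as a window difference, e.g.\ $\lfloor (w(n+j)-w(i))/N \rfloor$ for $e_i+e_j$; the antisymmetry you flag in your third issue has already been absorbed into those statements. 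Once you quantify over all pairs of defining beads on cyclically adjacent runners — residues taken mod $N$, adjacency taken mod $R$, which is exactly the congruence $\big(w(j)\bmod N\big)\equiv \big(w(i)\bmod N\big)\pm 1 \pmod R$ appearing in the paper's proof — your type-$A$ skeleton goes through verbatim, and your own observation that each row of $N$ labels contains exactly one non-entry supplies the final translation: a shift by $Rm\pm 1$ entries in the linear order is one column (cyclically) and $m$ levels, i.e.\ the $(Rm\pm1)$-flush condition.
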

\begin{proof}
Let $w\A$ be a dominant alcove.  Then $w\A$ is $m$-minimal if it is the 
minimal length alcove in its region of the $m$-Shi arrangement.  Equivalently,
for each descent $s_i$ of $w$, we must have that $w$ and $ws_i$ are separated by an
$m$-Shi hyperplane.  Contrapositively, there do not exist two defining beads in
the abacus for $w$ that form a right descent and contribute a left inner product
with a positive root that is greater than $m$.

By Lemmas~\ref{l:inner_product} and \ref{l:right_descent}, this means that the
one-line notation for $w$ never contains $1 \leq i < j \leq R$ with
\[ \bigg\lf \frac{w(j)-w(i)}{N} \bigg\rf > m \]
and 
\[ \big(w(j) \mod N\big) \equiv \big(w(i) \mod N\big) + 1 \ \pmod R. \]
But this is precisely equivalent to requiring that the abacus be $(Rm+1)$-flush.

Similarly, $w\A$ is $m$-bounded if it is the maximal length alcove in
its region of the $m$-Shi arrangement.  This is equivalent to requiring that
there do not exist two defining beads in $w$ that form a right ascent and have a
left inner product with the corresponding positive root that is greater than
$m$.  Once again by Lemmas~\ref{l:inner_product} and \ref{l:right_descent}, this
means that the one-line notation for $w$ never contains $1 \leq i < j \leq R$ with
\[ \bigg\lf \frac{w(j)-w(i)}{N} \bigg\rf > m \]
and 
\[ \big(w(j) \mod N\big) \equiv \big(w(i) \mod N\big) - 1 \ \pmod R. \]
This is precisely equivalent to requiring that the abacus be $(Rm-1)$-flush.
\end{proof}

\begin{example}
    In type~$A_2$, $m = 1$, the $m$-minimal alcoves correspond to
    \[ 123, 024, 015, \bar{1}34, \bar{2}26. \]

    The element $\bar{2}35$ is not minimal because the $\bar{2}$ and $5$ are off
    by $2 > m$ levels, and they form a descent since $\bar{2}$ lies on column
    $(-2 \mod 3) = 1$ of the abacus while $5$ lies on column $(5 \mod 3) = 2$.

    The element $\bar{2}26$ is minimal because although $\bar{2}$ and $6$ are
    off by $2 > m$ levels, we find that these two entries do not form a descent
    on the abacus.
\end{example}

\begin{example}
    In type~$C_2$, $m = 1$, the $m$-minimal alcoves correspond to
    \[ 1234, \bar{1}236, \bar{2}147, \bar{2}\bar{1}67, \bar{4}239,
    \bar{7}\bar{1}6\hspace{.02in}12. \]
\end{example}

\begin{corollary}
The $m$-minimal alcoves in the $m$-Shi arrangement of type~$C_n$ are in bijection
with self-conjugate partitions that are $(2n)$-core and $(2nm+1)$-core.
The $m$-bounded alcoves in the $m$-Shi arrangement of type~$C_n$ are in bijection
with self-conjugate partitions that are $(2n)$-core and $(2nm-1)$-core.
\end{corollary}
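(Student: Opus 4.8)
The plan is to read the Corollary off Theorem~\ref{thm:alcove} by specializing to type $C$ and then converting the flush condition on abacus diagrams into a core condition on partitions via James's bijection. Throughout I set $R = 2n$ and $N = 2n+1$. First I would assemble the chain of bijections already available in the paper. A dominant alcove $w\A$ in $\Shi^m(C_n)$ is one with $D_L(w) \subseteq \{s_0\}$, and such $w$ are exactly the minimal length coset representatives of $\widetilde{C}_n/C_n$, which are represented by balanced type-$C$ abacus diagrams; these are automatically $2n$-flush, and the restricted James bijection of Section~\ref{sec:abacus} identifies them with self-conjugate $2n$-core partitions. Thus, before any Shi condition is imposed, the dominant alcoves of $\Shi^m(C_n)$ are already in bijection with \emph{all} self-conjugate $(2n)$-cores, and the self-conjugacy and $2n$-core properties come for free from the type-$C$ normalization. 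Theorem~\ref{thm:alcove} with $R = 2n$ then says that such an alcove is $m$-minimal exactly when its abacus is $(2nm+1)$-flush and $m$-bounded exactly when its abacus is $(2nm-1)$-flush, so the two claimed bijections will be obtained as restrictions of the James bijection once I match flushness with coreness.

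For that matching I would invoke the general principle from Section~\ref{sec:co}: a partition is $c$-core if and only if its boundary string (equivalently, its abacus) is $c$-flush, since stripping a rim $c$-hook is exactly removing a length-$c$ inversion. The work is to verify this principle in the type-$C$ model, whose entries range over $\Z \setminus N\Z$ rather than all of $\Z$. Concretely, I would read the boundary string off the type-$C$ abacus by recording beads and gaps in increasing order of position, skipping the omitted multiples of $N$, and prove the identity that a box of hook length $h$ corresponds to a gap-then-bead pair with entries $e_1 < e_2$ satisfying $h = R\,(\Delta\text{level}) + (\Delta\text{runner})$, where the two $\Delta$'s are the differences of the levels and runner-indices of $e_2$ and $e_1$. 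With this identity, "$\lambda$ has no box of hook length $2nm+1$" turns into the statement that, for defining beads on runners differing by $1 \pmod R$, the levels differ by at most $m$, which is precisely the $(2nm+1)$-flush condition extracted in the proof of Theorem~\ref{thm:alcove} (flushness lets one check only defining beads). The $(2nm-1)$ case is symmetric, using the opposite runner shift and the right-ascent condition.

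I expect the main obstacle to be exactly this translation between abacus-position differences and hook lengths in the type-$C$ model. Because the labels omit every multiple of $N = 2n+1$, a naive position difference of $j$ is \emph{not} a hook length of $j$: the discrepancy is governed by the number of skipped multiples of $N$ in between, and this is precisely the $R\,(\Delta\text{level})$ term in the identity above. Getting this bookkeeping right---confirming that the antisymmetry of a balanced type-$C$ abacus makes the $(2n)$-core condition automatic, and that the only surviving constraint is on consecutive runners with the expected level offset at the wrap from runner $2n$ back to runner $1$---is the delicate part. Once it is in place, combining the identity with Theorem~\ref{thm:alcove} and the James bijection yields both bijections at once; as a sanity check, for $C_2$ with $m=1$ this produces the six self-conjugate $(4,5)$-cores matching the six $m$-minimal alcoves listed after the theorem, consistent with the Ford--Mai--Sze count.
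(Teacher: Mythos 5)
Your proposal is correct and takes essentially the same route as the paper, which states this corollary without separate proof precisely because it follows from Theorem~\ref{thm:alcove} together with the Section~\ref{sec:abacus} bijection (dominant alcoves $\leftrightarrow$ balanced type-$C$ abaci $\leftrightarrow$ self-conjugate $2n$-cores via restricted James). Your extra care with the label-versus-hook-length discrepancy---the identity $h = R\,(\Delta\text{level}) + (\Delta\text{runner})$ accounting for the skipped multiples of $N=2n+1$---is exactly the bookkeeping the paper leaves implicit in Lemma~\ref{l:inner_product} and the proof of Theorem~\ref{thm:alcove}, and you resolve it correctly.
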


Applying Ford, Mai, and Sze's formula \cite{FMS}, we recover that there are
$\binom{nm+n}{n}$ dominant regions and $\binom{nm+n-1}{n}$ bounded regions in
the $m$-Shi arrangement of type~$C_n$.  This agrees with Athanasiadis's result
\cite[Corollary 1.3]{AthBull}.

\section{A major index statistic on simultaneous core partitions}
\label{sec:maj}

In this section, we define a major index statistic that gives the $q$-analog of the
Coxeter-Catalan numbers, $\Cat_q(A_{n-1})$ and $\Cat_q(C_n)$, for simultaneous
core partitions in types $A$ and $C$, respectively.

\begin{definition}
Let $\lam$ be a simultaneous $(n,n+1)$-core partition. 
 Create the sequence
 $x=(x_0,\hdots,x_{n-1})$ where $x_i$ equals the number of boxes in the first column of $\lam$ whose hook length modulo $n$ equals $i$. (Note  $x_0=0$ always.) Define 
\begin{equation}\label{eq:majA}
\maj_A(\lam)=\sum_{i\,:\,x_{i-1}\geq x_{i}} (2i-x_i).
\end{equation}

Let $\lam$ be a self-conjugate simultaneous $(2n,2n+1)$-core partition. 
We define the set $\calW$ of {\bf diagonal arm lengths} $\{w_1,w_2,\hdots,w_k\}$ where $w_i$ is one more than the number of boxes to the right of the $i$-th box on the diagonal of $\lam$.
Create the sequence $x=(x_0,x_1,\hdots,x_n)$ where $x_0=0$ and
\[x_i=\lvert\{w\in\calW:w\bmod 2n\equiv i\}\rvert-\lvert\{w\in\calW:w\bmod 2n\equiv 2n-i+1\}\rvert\]
for $1\leq i\leq n$. 
Define 
\begin{equation}\label{eq:majC}
\maj_C(\lam)=2\sum_{i\,:\,x_{i-1}\geq x_{i}} (2i-x_i-1).
\end{equation}
\end{definition}

\begin{remark}
Similar to the definition of the major index statistic of a permutation, these sums are over the positions of the weak descents in a sequence.  Since our sequence $x$ starts with $x_0$, our definition of position of a weak descent is the value $i$ such that $x_{i-1}\geq x_i$.  This allows for possible weak descents to occur in positions $1$ through $n-2$ in type~$A$ and in positions $1$ through $n-1$ in type~$C$.

In terms of the abacus diagram, $x_i$ is the level of the defining bead on runner $i$ (in type~$A$ the abacus must be normalized first), so the definitions of $\maj_A$ and $\maj_C$ can also be applied directly to the corresponding abacus diagram.
\end{remark}

\begin{example}
For the $(7,8)$-core partition $\lam=(7,6,2,2,2,2)$, the hook lengths of the boxes in the first column of $\lam$ are $\{12,10,5,4,3,2\}$, which modulo $7$ equals $\{5,3,5,4,3,2\}$.  We conclude that $x=(0,0,1,2,1,2,0)$ with weak descents in positions $1$, $4$, and $6$.  As such, \[\maj_A(\lam)=(2\cdot 1-0)+(2\cdot 4-1)+(2\cdot 6-0)=21.\]
\end{example}

\begin{example}
For the self-conjugate $(14,15)$-core partition
\[\mu=(19,19,16,12,9,9,9,7,7,4,4,4,3,3,3,3,2,2,2),\] the set of diagonal arm lengths is $\{19,18,14,9,5,4,3\}$, which modulo $14$ gives $\{5,4,14,9,5,4,3\}$.  Therefore we have $x=(0,-1,0,1,2,2,-1,0)$ with weak descents in positions $1$, $5$, and $6$.  We find that 
\[\maj_C(\mu)=2\big[\big(2\cdot 1-1-(-1)\big)+\big(2\cdot 5-1-2\big)+\big(2\cdot 6-1-(-1)\big)\big]=42.\]
\end{example}

The main result in this section is that

\begin{theorem}\label{t:qA} The major index statistic defined above gives a $q$-analog of the type~$A$ and type~$C$ Catalan numbers.  In particular,
\[\sum_{\substack{\textup{$\lam$ is an}\\\textup{$(n,n+1)$-core}}} q^{\maj_A(\lam)}=\frac{1}{[n+1]_q}\qbinom{2n}{n}
\qquad\textup{ and }\qquad
\sum_{\substack{\textup{$\lam$ is a self-conj.}\\\textup{$(2n,2n+1)$-core}}}
q^{\maj_C(\lam)}=\qqbinom{2n}{n}.\]
\end{theorem}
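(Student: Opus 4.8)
The plan is to recognize each of $\maj_A$ and $\maj_C$ as the ordinary major index on the lattice path attached to $\lam$ by the relevant bijection, and then to quote the known generating-function evaluations of major index over those paths. In type~$A$ I would compose Anderson's bijection with the removal of the final (necessarily horizontal) step, as described in Section~\ref{sec:co}, to obtain a bijection $\lam \mapsto P$ between $(n,n+1)$-cores and classical Dyck paths from $(0,0)$ to $(n,n)$ weakly above the diagonal. MacMahon's theorem \cite{MacMahon} then asserts that $\sum_P q^{\maj(P)} = \frac{1}{[n+1]_q}\qbinom{2n}{n}$, so it suffices to prove the single identity $\maj_A(\lam) = \maj(P)$ for every core $\lam$.

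To establish this identity I would work entirely on the normalized $n$-flush abacus, using the observation (from the remark) that $x_i$ is the level of the defining bead on runner $i$. The sequence $(x_0,\dots,x_{n-1})$ records the heights of the beads that Anderson's construction reads off, and I would track how these heights are converted into the north/east step sequence of $P$. The heart of the computation is that a weak descent $x_{i-1}\ge x_i$ of the level sequence corresponds exactly to a \emph{valley} of $P$ (an east step immediately followed by a north step), and that the weight $2i - x_i$ equals the vertex-index of that valley in MacMahon's reading of $\maj$. Summing over all valleys then gives $\maj_A(\lam)=\maj(P)$ and hence the type~$A$ identity.

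For type~$C$ the strategy is parallel but runs through the type-$C$ abacus model of \cite{HJ12} together with the Ford--Mai--Sze bijection \cite{FMS}, which matches self-conjugate $(2n,2n+1)$-cores with arbitrary monotone lattice paths $P$ in an $n\times n$ box (there are $\binom{2n}{n}$ of these, with \emph{no} diagonal constraint). Here I would use the standard evaluation $\sum_P q^{\maj(P)}=\qbinom{2n}{n}$; replacing $q$ by $q^2$ yields $\sum_P q^{2\maj(P)}=\qqbinom{2n}{n}$, so the goal becomes $\maj_C(\lam)=2\,\maj(P)$. The diagonal arm lengths $\calW$ play the role that first-column hook lengths played in type~$A$: the antisymmetry of the balanced type-$C$ abacus (entry $N-b$ is a bead if and only if $N+b$ is a gap) folds the bead data onto the half recorded by $x_1,\dots,x_n$, and the alternating-count definition of $x_i$ is precisely what this folding produces. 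I would again match weak descents of $(x_0,\dots,x_n)$ with valleys of $P$, now verifying that $2(2i-x_i-1)$ reproduces twice the corresponding valley index; the extra factor of $2$ and the shift by $-1$ are exactly the bookkeeping forced by the doubling and by the special runner $n$ arising from the long root $2v_i$.

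The main obstacle in both cases is the explicit combinatorial lemma identifying the abacus-level formula with the ordinary major index of the associated path, that is, verifying that weak descents of the level sequence are in a position-preserving bijection with the valleys of $P$ and that the weights agree term by term rather than merely up to an additive or multiplicative constant. Type~$A$ should be a direct unwinding of Anderson's bijection. Type~$C$ is the delicate part: one must simultaneously handle the self-conjugacy, the antisymmetric folding of the abacus, the shift in $(2i-x_i-1)$, and the factor of $2$, and confirm that nothing irregular occurs at runner $n$, the runner attached to the doubled root. Getting this correspondence exactly right is where the real work lies.
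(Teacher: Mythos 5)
Your proposal is correct and follows essentially the same route as the paper's proof: both transfer $\maj_A$ and $\maj_C$ through Anderson's bijection and the Ford--Mai--Sze/type-$C$ abacus bijection to lattice paths ending at $(n,n)$, identify weak descents of the bead-level sequence $x$ with valleys of the path at vertex index $2i-x_i$ (resp.\ $2i-x_i-1$ in type~$C$), and conclude via MacMahon's evaluations. Your relation $\maj_C(\lam)=2\,\maj(P)$ is the correct one, consistent with the paper's term-by-term contribution count $2(2i-x_i-1)$, twice the valley index (the paper's proof states the relation with the factor of $2$ on the other side, an evident typo).
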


The proof of this result is given below; it uses the bijections between simultaneous core partitions, abacus diagrams, and lattice paths.  In type~$A$, we quickly revisit and then apply Anderson's bijection \cite[Proposition~1]{Anderson}.  In type~$C$, Ford, Mai, and Sze \cite{FMS} developed a lattice path method to count self-conjugate $(a,b)$-core partitions for $a<b$ relatively prime.  Hanusa and Jones's abacus model for type~$C$ \cite{HJ12} streamlines this bijection and helps to develop further intuition about it.

\begin{proposition}[Proposition 1, \cite{Anderson}]\label{p:Anderson}
There exists a bijection:
\[
\calL:\left\{\begin{tabular}{c}$a$-flush and $b$-flush\\abacus diagrams\end{tabular}\right\}
\longleftrightarrow 
\left\{\begin{tabular}{c}$N$-$E$ lattice paths\\ $(0,0)\row(b,a)$\\on or above $y=\frac{a}{b}x$\end{tabular}\right\}.
\]
\end{proposition}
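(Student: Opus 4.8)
The plan is to make both sides of $\calL$ concrete through the bead set and then realize the map as reading off the boundary of an order ideal in Anderson's labeled grid. First I would reformulate the source side: an abacus is $a$-flush and $b$-flush exactly when its bead set $B\subseteq\Z$ is closed under subtracting $a$ and under subtracting $b$, equivalently when the gap set $G=\Z\setminus B$ is closed under adding $a$ and $b$. Writing $e\preceq e'$ whenever $e'-e\in\{ua+vb:u,v\in\Z_{\ge 0}\}$, this says precisely that $B$ is an order ideal and $G$ a filter of the poset $(\Z,\preceq)$. Since the underlying partition is finite, $B$ contains all sufficiently negative integers and $G$ all sufficiently positive ones, so the beads that are first-column hook lengths form a \emph{finite} order ideal, and it is this finite object that the path will encode.

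Second, I would set up Anderson's grid. The key number-theoretic input is that, in a normalized flush abacus, no bead may sit at a positive position lying in the numerical semigroup $\langle a,b\rangle=\{ua+vb:u,v\in\Z_{\ge 0}\}$, since closure under $-a$ and $-b$ would drag such a bead down to the gap at $0$; hence the positive beads are confined to the $(a-1)(b-1)/2$ gaps of the semigroup (here $\gcd(a,b)=1$ guarantees both that this count is an integer and that the gap set is finite). Because $\gcd(a,b)=1$, I can label the cells $(i,j)$ of the $a\times b$ rectangle by an affine-linear form $L(i,j)$ that decreases by $a$ with each step to the right and by $b$ with each step down, calibrated so that the cells strictly below the main diagonal receive exactly these semigroup gaps as labels, bijectively, and so that $\preceq$ becomes the cell order ``weakly up and to the left.'' Under this dictionary a flush bead set becomes a Young-diagram-shaped order ideal sitting inside the sub-diagonal staircase.

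Third, I would take the boundary lattice path of this order ideal, recording a north step along each cell edge that separates the ideal from its complement and an east step otherwise; this is exactly the ``north for bead, east for gap'' reading of the abacus, now folded onto the grid. Because the rectangle has $b$ columns and $a$ rows, the path runs from $(0,0)$ to $(b,a)$ using $b$ east and $a$ north steps, and because the ideal is confined to the sub-diagonal staircase the path stays weakly above $y=\frac{a}{b}x$. The inverse reverses this: a path weakly above the diagonal bounds an order ideal, whose labels specify the top bead in each residue class and so reconstruct a unique flush abacus, giving a two-sided inverse.

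The main obstacle is the middle step: verifying that the single labeling $L$ is simultaneously (i) a bijection from the sub-diagonal cells onto the semigroup gaps, (ii) an isomorphism carrying $\preceq$ to the cell order, and (iii) calibrated so that ``weakly above the diagonal'' is \emph{exactly} the order-ideal condition rather than an off-by-one variant. All three rest on $\gcd(a,b)=1$ (through the Chinese Remainder Theorem and the count $(a-1)(b-1)/2$) and on pinning down the abacus normalization that fixes the integer window; this is where coprimality is genuinely used. By contrast, the endpoint identification and the $a+b$ step-count bookkeeping are routine once the grid is correctly calibrated.
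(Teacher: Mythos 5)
Your proposal is, in substance, the paper's own proof: the map $\calL$ here is exactly Anderson's labeled-grid bijection (vertically reflected), placing $-a(i+1)+bj$ in the box with corners $(i,j)$ and $(i+1,j+1)$ so that the $a$-flush condition is read horizontally and the $b$-flush condition vertically, and then taking the dividing path between beads and gaps. Your order-ideal reformulation of flushness and the count $(a-1)(b-1)/2$ of gaps of the numerical semigroup $\langle a,b\rangle$ are correct scaffolding for that same construction, not a different route.

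One concrete slip in your calibration should be repaired, since it is exactly the point you flag as the crux. With labels decreasing by $a$ to the right and by $b$ downward, the positive labels---which are precisely the semigroup gaps---occupy the cells strictly \emph{above} the line $y=\frac{a}{b}x$, not a sub-diagonal staircase: a cell's label $bj-a(i+1)$ is positive exactly when its lower-right corner $(i+1,j)$ lies strictly above the line, so your conditions (i)--(iii) as literally stated are mutually inconsistent with your own gradient. Relatedly, the causal arrow in your third step points the wrong way: confining the \emph{bead} ideal to the staircase does not by itself pin the path, since beads also fill the rest of the grid. What forces $\calL(A)$ weakly above the diagonal is the complementary confinement: in a normalized abacus every negative position is a bead (the first gap is at $0$), so every cell met by or lying below the diagonal has negative label and is a bead, hence every abacus gap in the grid sits strictly above the line and the separating path cannot dip below it. (Equivalently, the empty ideal already produces the lowest path weakly above $y=\frac{a}{b}x$, and enlarging the ideal only pushes the path up and to the left.) With ``below'' replaced by ``above'' and that justification reversed, your argument coincides with the paper's proof, including your inverse map, which matches the paper's reading of the defining beads of the $a$-flush and $b$-flush abaci off the north and east steps.
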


\begin{proof}
We will apply a vertical reflection to Anderson's original bijection; see Figure~\ref{fig:Anderson}.

Organize the integers into a square lattice; in the box with corners $(i,j)$ and $(i+1,j+1)$, place the integer $-a(i+1)+bj$.  For a {\em normalized} abacus $A$ which is both $a$-flush and $b$-flush, the dividing lattice path between integers that are beads of $A$ and integers that are gaps of $A$ is a lattice path $\calL(A)$ from $(0,0)$ to $(b,a)$ on or above the diagonal.  The reason this works is that under this assignment of integers, the $a$-flush condition can be read horizontally and the $b$-flush condition can be read vertically.  

Conversely, for a $N$-$E$ lattice path $L:(0,0)\row(b,a)$, interpret the number to the right of an up step to be a defining bead on the $a$-flush abacus, and the number below a right step to be a defining bead on the $b$-flush abacus.  
\end{proof}

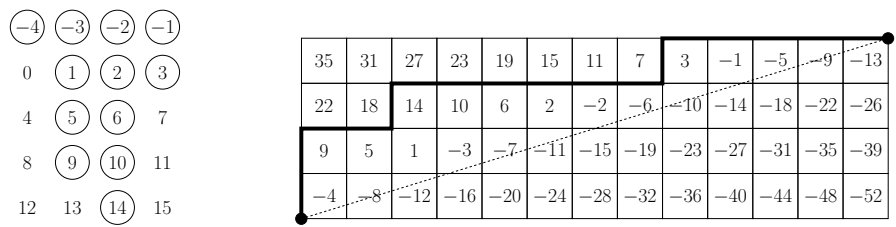
\begin{figure}
\begin{center}
\scalebox{0.3}{\begin{tikzpicture}
\draw (0cm,0cm) node [circle, minimum size=1.5cm, inner sep=0pt, draw, anchor=south west] {\Huge $-4$};
\draw (2cm,0cm) node [circle, minimum size=1.5cm, inner sep=0pt, draw, anchor=south west] {\Huge $-3$};
\draw (4cm,0cm) node [circle, minimum size=1.5cm, inner sep=0pt, draw, anchor=south west] {\Huge $-2$};
\draw (6cm,0cm) node [circle, minimum size=1.5cm, inner sep=0pt, draw, anchor=south west] {\Huge $-1$};
\draw (0cm,-2cm) node [circle, minimum size=1.5cm, inner sep=0pt, draw=none, anchor=south west] {\Huge $0$};
\draw (2cm,-2cm) node [circle, minimum size=1.5cm, inner sep=0pt, draw, anchor=south west] {\Huge $1$};
\draw (4cm,-2cm) node [circle, minimum size=1.5cm, inner sep=0pt, draw, anchor=south west] {\Huge $2$};
\draw (6cm,-2cm) node [circle, minimum size=1.5cm, inner sep=0pt, draw, anchor=south west] {\Huge $3$};
\draw (0cm,-4cm) node [circle, minimum size=1.5cm, inner sep=0pt, draw=none, anchor=south west] {\Huge $4$};
\draw (2cm,-4cm) node [circle, minimum size=1.5cm, inner sep=0pt, draw, anchor=south west] {\Huge $5$};
\draw (4cm,-4cm) node [circle, minimum size=1.5cm, inner sep=0pt, draw, anchor=south west] {\Huge $6$};
\draw (6cm,-4cm) node [circle, minimum size=1.5cm, inner sep=0pt, draw=none, anchor=south west] {\Huge $7$};
\draw (0cm,-6cm) node [circle, minimum size=1.5cm, inner sep=0pt, draw=none, anchor=south west] {\Huge $8$};
\draw (2cm,-6cm) node [circle, minimum size=1.5cm, inner sep=0pt, draw, anchor=south west] {\Huge $9$};
\draw (4cm,-6cm) node [circle, minimum size=1.5cm, inner sep=0pt, draw, anchor=south west] {\Huge $10$};
\draw (6cm,-6cm) node [circle, minimum size=1.5cm, inner sep=0pt, draw=none, anchor=south west] {\Huge $11$};
\draw (0cm,-8cm) node [circle, minimum size=1.5cm, inner sep=0pt, draw=none, anchor=south west] {\Huge $12$};
\draw (2cm,-8cm) node [circle, minimum size=1.5cm, inner sep=0pt, draw=none, anchor=south west] {\Huge $13$};
\draw (4cm,-8cm) node [circle, minimum size=1.5cm, inner sep=0pt, draw, anchor=south west] {\Huge $14$};
\draw (6cm,-8cm) node [circle, minimum size=1.5cm, inner sep=0pt, draw=none, anchor=south west] {\Huge $15$};
\end{tikzpicture}}\qquad\qquad
\scalebox{0.30}{\begin{tikzpicture}
\draw (0cm,0cm) node [rectangle, minimum size=2cm, inner sep=0pt, draw, anchor=south west] {\Huge$-4$};
\draw (2cm,0cm) node [rectangle, minimum size=2cm, inner sep=0pt, draw, anchor=south west] {\Huge$-8$};
\draw (4cm,0cm) node [rectangle, minimum size=2cm, inner sep=0pt, draw, anchor=south west] {\Huge$-12$};
\draw (6cm,0cm) node [rectangle, minimum size=2cm, inner sep=0pt, draw, anchor=south west] {\Huge$-16$};
\draw (8cm,0cm) node [rectangle, minimum size=2cm, inner sep=0pt, draw, anchor=south west] {\Huge$-20$};
\draw (10cm,0cm) node [rectangle, minimum size=2cm, inner sep=0pt, draw, anchor=south west] {\Huge$-24$};
\draw (12cm,0cm) node [rectangle, minimum size=2cm, inner sep=0pt, draw, anchor=south west] {\Huge$-28$};
\draw (14cm,0cm) node [rectangle, minimum size=2cm, inner sep=0pt, draw, anchor=south west] {\Huge$-32$};
\draw (16cm,0cm) node [rectangle, minimum size=2cm, inner sep=0pt, draw, anchor=south west] {\Huge$-36$};
\draw (18cm,0cm) node [rectangle, minimum size=2cm, inner sep=0pt, draw, anchor=south west] {\Huge$-40$};
\draw (20cm,0cm) node [rectangle, minimum size=2cm, inner sep=0pt, draw, anchor=south west] {\Huge$-44$};
\draw (22cm,0cm) node [rectangle, minimum size=2cm, inner sep=0pt, draw, anchor=south west] {\Huge$-48$};
\draw (24cm,0cm) node [rectangle, minimum size=2cm, inner sep=0pt, draw, anchor=south west] {\Huge$-52$};
\draw (0cm,2cm) node [rectangle, minimum size=2cm, inner sep=0pt, draw, anchor=south west] {\Huge$9$};
\draw (2cm,2cm) node [rectangle, minimum size=2cm, inner sep=0pt, draw, anchor=south west] {\Huge$5$};
\draw (4cm,2cm) node [rectangle, minimum size=2cm, inner sep=0pt, draw, anchor=south west] {\Huge$1$};
\draw (6cm,2cm) node [rectangle, minimum size=2cm, inner sep=0pt, draw, anchor=south west] {\Huge$-3$};
\draw (8cm,2cm) node [rectangle, minimum size=2cm, inner sep=0pt, draw, anchor=south west] {\Huge$-7$};
\draw (10cm,2cm) node [rectangle, minimum size=2cm, inner sep=0pt, draw, anchor=south west] {\Huge$-11$};
\draw (12cm,2cm) node [rectangle, minimum size=2cm, inner sep=0pt, draw, anchor=south west] {\Huge$-15$};
\draw (14cm,2cm) node [rectangle, minimum size=2cm, inner sep=0pt, draw, anchor=south west] {\Huge$-19$};
\draw (16cm,2cm) node [rectangle, minimum size=2cm, inner sep=0pt, draw, anchor=south west] {\Huge$-23$};
\draw (18cm,2cm) node [rectangle, minimum size=2cm, inner sep=0pt, draw, anchor=south west] {\Huge$-27$};
\draw (20cm,2cm) node [rectangle, minimum size=2cm, inner sep=0pt, draw, anchor=south west] {\Huge$-31$};
\draw (22cm,2cm) node [rectangle, minimum size=2cm, inner sep=0pt, draw, anchor=south west] {\Huge$-35$};
\draw (24cm,2cm) node [rectangle, minimum size=2cm, inner sep=0pt, draw, anchor=south west] {\Huge$-39$};
\draw (0cm,4cm) node [rectangle, minimum size=2cm, inner sep=0pt, draw, anchor=south west] {\Huge$22$};
\draw (2cm,4cm) node [rectangle, minimum size=2cm, inner sep=0pt, draw, anchor=south west] {\Huge$18$};
\draw (4cm,4cm) node [rectangle, minimum size=2cm, inner sep=0pt, draw, anchor=south west] {\Huge$14$};
\draw (6cm,4cm) node [rectangle, minimum size=2cm, inner sep=0pt, draw, anchor=south west] {\Huge$10$};
\draw (8cm,4cm) node [rectangle, minimum size=2cm, inner sep=0pt, draw, anchor=south west] {\Huge$6$};
\draw (10cm,4cm) node [rectangle, minimum size=2cm, inner sep=0pt, draw, anchor=south west] {\Huge$2$};
\draw (12cm,4cm) node [rectangle, minimum size=2cm, inner sep=0pt, draw, anchor=south west] {\Huge$-2$};
\draw (14cm,4cm) node [rectangle, minimum size=2cm, inner sep=0pt, draw, anchor=south west] {\Huge$-6$};
\draw (16cm,4cm) node [rectangle, minimum size=2cm, inner sep=0pt, draw, anchor=south west] {\Huge$-10$};
\draw (18cm,4cm) node [rectangle, minimum size=2cm, inner sep=0pt, draw, anchor=south west] {\Huge$-14$};
\draw (20cm,4cm) node [rectangle, minimum size=2cm, inner sep=0pt, draw, anchor=south west] {\Huge$-18$};
\draw (22cm,4cm) node [rectangle, minimum size=2cm, inner sep=0pt, draw, anchor=south west] {\Huge$-22$};
\draw (24cm,4cm) node [rectangle, minimum size=2cm, inner sep=0pt, draw, anchor=south west] {\Huge$-26$};
\draw (0cm,6cm) node [rectangle, minimum size=2cm, inner sep=0pt, draw, anchor=south west] {\Huge$35$};
\draw (2cm,6cm) node [rectangle, minimum size=2cm, inner sep=0pt, draw, anchor=south west] {\Huge$31$};
\draw (4cm,6cm) node [rectangle, minimum size=2cm, inner sep=0pt, draw, anchor=south west] {\Huge$27$};
\draw (6cm,6cm) node [rectangle, minimum size=2cm, inner sep=0pt, draw, anchor=south west] {\Huge$23$};
\draw (8cm,6cm) node [rectangle, minimum size=2cm, inner sep=0pt, draw, anchor=south west] {\Huge$19$};
\draw (10cm,6cm) node [rectangle, minimum size=2cm, inner sep=0pt, draw, anchor=south west] {\Huge$15$};
\draw (12cm,6cm) node [rectangle, minimum size=2cm, inner sep=0pt, draw, anchor=south west] {\Huge$11$};
\draw (14cm,6cm) node [rectangle, minimum size=2cm, inner sep=0pt, draw, anchor=south west] {\Huge$7$};
\draw (16cm,6cm) node [rectangle, minimum size=2cm, inner sep=0pt, draw, anchor=south west] {\Huge$3$};
\draw (18cm,6cm) node [rectangle, minimum size=2cm, inner sep=0pt, draw, anchor=south west] {\Huge$-1$};
\draw (20cm,6cm) node [rectangle, minimum size=2cm, inner sep=0pt, draw, anchor=south west] {\Huge$-5$};
\draw (22cm,6cm) node [rectangle, minimum size=2cm, inner sep=0pt, draw, anchor=south west] {\Huge$-9$};
\draw (24cm,6cm) node [rectangle, minimum size=2cm, inner sep=0pt, draw, anchor=south west] {\Huge$-13$};
\filldraw[black] (0,0) circle (0.25cm);
\filldraw[black] (26,8) circle (0.25cm);
\draw[dashed](0,0)--(26,8);
\draw[line width=5pt](0,0)--(0,4)--(4,4)--(4,6)--(16,6)--(16,8)--(26,8);
\end{tikzpicture}}
\end{center}
\caption{The $(4,13)$-core partition $\lam=(7,4,4,2,2,1,1,1)$ corresponds to a $4$-flush and $13$-flush abacus $a$ and a lattice path from $(0,0)$ to $(13,4)$ staying above $y=\frac{4}{13}x$.  The boxes to the right of the lattice path correspond to the beads in the abacus.}
\label{fig:Anderson}
\end{figure}

\begin{proposition}\label{p:FMS}
There exists a bijection:
\[
\calL:\left\{\begin{tabular}{c}\textup{antisymmetric $a$-flush and }\\\textup{$b$-flush abacus diagrams}\end{tabular}\right\}
\longleftrightarrow 
\left\{\begin{tabular}{c}\textup{$N$-$E$ lattice paths}\\\textup{$(0,0)\row(\big\lfloor\frac{b}{2}\big\rfloor,\big\lfloor\frac{a}{2}\big\rfloor)$}\end{tabular}\right\}.
\]
\end{proposition}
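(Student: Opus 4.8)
The plan is to mimic the proof of Proposition~\ref{p:Anderson}, building the same kind of integer-labeled lattice but exploiting the antisymmetry of the abacus to halve the picture. Recall that for a self-conjugate partition the underlying boundary sequence is \emph{antisymmetric}: an entry $e$ is a bead if and only if its mirror image (the entry obtained from the type-$C$ symmetry, where the level of the defining bead on runner $i$ is the negative of that on runner $N-i$) is a gap. Consequently the entries split into mirror pairs $\{e,\bar e\}$, and in each such pair exactly one entry is a bead. An antisymmetric $a$-flush and $b$-flush abacus is therefore completely determined by recording, for one representative of each mirror pair, whether that representative is a bead, and the $a$-flush and $b$-flush conditions descend to conditions on these representatives.

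First I would set up an integer grid on a $\lfloor b/2\rfloor \times \lfloor a/2\rfloor$ rectangle, labeling boxes by a linear form analogous to Anderson's $-a(i+1)+bj$, chosen so that on the representatives of the mirror pairs the $a$-flush condition is read horizontally and the $b$-flush condition vertically, exactly as in the proof of Proposition~\ref{p:Anderson}. Reading the antisymmetric abacus then produces a dividing $N$-$E$ lattice path $\calL(A)$ from $(0,0)$ to $(\lfloor b/2\rfloor,\lfloor a/2\rfloor)$, with the antisymmetry recovering the omitted half of the abacus. For the inverse I would, as in Anderson's argument, interpret the entry to the right of each up step as a defining bead of the $a$-flush half-abacus and the entry below each right step as a defining bead of the $b$-flush half-abacus, and then extend the configuration antisymmetrically; the flush hypotheses guarantee the extension is consistent and globally $a$-flush and $b$-flush.

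The step I expect to be the main obstacle, and the reason the resulting paths are \emph{unconstrained} in contrast with the ``on or above the diagonal'' paths of Proposition~\ref{p:Anderson}, is the disappearance of the diagonal inequality. In the type-$A$ picture that inequality encodes the requirement that the bead configuration form a valid balanced abacus, i.e.\ that there be equally many beads and gaps on either side of the center. Here the antisymmetry forces this balance \emph{automatically}, since beads and gaps are exchanged in mirror pairs, so no inequality survives on the half-rectangle and the dividing path may be taken arbitrarily. Making this precise requires a careful treatment of the parity of $a$ and $b$: the behavior of the central step along the axis of antisymmetry depends on which of $a,b$ is even, and it is exactly this analysis that produces the floor functions $\lfloor a/2\rfloor$ and $\lfloor b/2\rfloor$ in the dimensions of the target rectangle.

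As a consistency check on the whole construction, the number of unrestricted $N$-$E$ paths from $(0,0)$ to $(\lfloor b/2\rfloor,\lfloor a/2\rfloor)$ is
\[
\binom{\lfloor a/2\rfloor+\lfloor b/2\rfloor}{\lfloor a/2\rfloor,\lfloor b/2\rfloor},
\]
which agrees with the Ford--Mai--Sze count of self-conjugate $(a,b)$-cores; so once the map is shown to be a well-defined injection, surjectivity (equivalently, bijectivity) follows by a cardinality argument if a direct verification of the inverse proves cumbersome.
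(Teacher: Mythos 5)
Your proposal follows essentially the same route as the paper's proof: an Anderson-style integer labeling of a rectangle in which the $a$-flush condition is read horizontally and the $b$-flush condition vertically, with the antisymmetry halving the picture and the parity analysis (forced bead/gap pairs at the center, and a labeling formula depending on the parity of $a$) producing the floors $\big\lfloor\frac{a}{2}\big\rfloor$ and $\big\lfloor\frac{b}{2}\big\rfloor$ --- the paper merely halves at the end (drawing the full symmetric path $(0,0)\row(b,a)$ and truncating at its center of rotational symmetry) where you halve at the start (working with one representative per mirror pair). One caution: your fallback cardinality argument for surjectivity would be circular in this context, since the count of unrestricted paths matching the Ford--Mai--Sze formula is precisely what the bijection is meant to explain; but your primary argument, constructing the inverse directly as in Anderson's proof, does not need it.
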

\begin{proof}
That an antisymmetric abacus is $a$-flush and $b$-flush implies certain conditions on the possible sets of beads and gaps.  For one, when $a$ (or $b$) is odd, then there must be a bead in position $\frac{1-a}{2}$ (or $\frac{1-b}{2}$) and a gap in position $\frac{a+1}{2}$ (or $\frac{b+1}{2}$) since they are an antisymmetric pair and in the same runner.  

When $a$ and $b$ are of opposite parity, then there must be a bead in position $\frac{1-b-a}{2}$ and a gap in position $\frac{1+b+a}{2}$ because they are an antisymmetric pair and the inverse assignment would create an impossibility for position $\frac{1+b-a}{2}$ in terms of being both $a$-flush and $b$-flush.

Organize the integers into a square lattice depending on the parity of $a$.
When $a$ is even, place the integer $\frac{1+b-a}{2}-ai+bj$ in the box with corners $(i,j)$ and $(i+1,j+1)$.  When $a$ is odd, instead insert $\frac{1+2b-a}{2}-ai+bj$.  (See Figure~\ref{fig:FMS}.) 
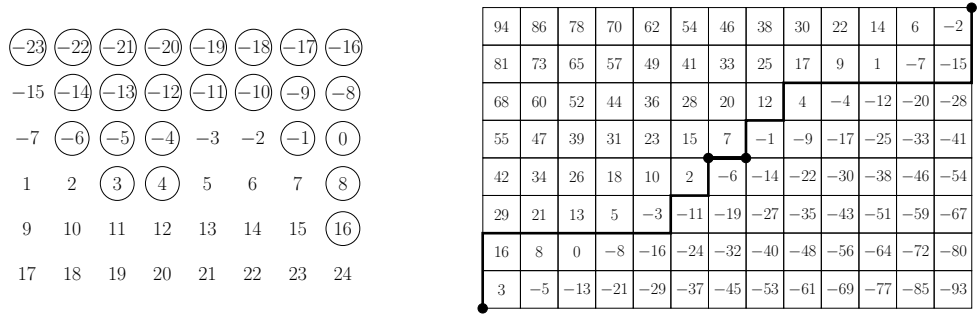
\begin{figure}
\begin{center}

\raisebox{0.3cm}{\scalebox{0.3}{\begin{tikzpicture}
\draw (0cm,4cm) node [circle, minimum size=1.5cm, inner sep=0pt, draw, anchor=south west] {\Huge $-23$};
\draw (2cm,4cm) node [circle, minimum size=1.5cm, inner sep=0pt, draw, anchor=south west] {\Huge $-22$};
\draw (4cm,4cm) node [circle, minimum size=1.5cm, inner sep=0pt, draw, anchor=south west] {\Huge $-21$};
\draw (6cm,4cm) node [circle, minimum size=1.5cm, inner sep=0pt, draw, anchor=south west] {\Huge $-20$};
\draw (8cm,4cm) node [circle, minimum size=1.5cm, inner sep=0pt, draw, anchor=south west] {\Huge $-19$};
\draw (10cm,4cm) node [circle, minimum size=1.5cm, inner sep=0pt, draw, anchor=south west] {\Huge $-18$};
\draw (12cm,4cm) node [circle, minimum size=1.5cm, inner sep=0pt, draw, anchor=south west] {\Huge $-17$};
\draw (14cm,4cm) node [circle, minimum size=1.5cm, inner sep=0pt, draw, anchor=south west] {\Huge $-16$};
\draw (0cm,2cm) node [circle, minimum size=1.5cm, inner sep=0pt, draw=none, anchor=south west] {\Huge $-15$};
\draw (2cm,2cm) node [circle, minimum size=1.5cm, inner sep=0pt, draw, anchor=south west] {\Huge $-14$};
\draw (4cm,2cm) node [circle, minimum size=1.5cm, inner sep=0pt, draw, anchor=south west] {\Huge $-13$};
\draw (6cm,2cm) node [circle, minimum size=1.5cm, inner sep=0pt, draw, anchor=south west] {\Huge $-12$};
\draw (8cm,2cm) node [circle, minimum size=1.5cm, inner sep=0pt, draw, anchor=south west] {\Huge $-11$};
\draw (10cm,2cm) node [circle, minimum size=1.5cm, inner sep=0pt, draw, anchor=south west] {\Huge $-10$};
\draw (12cm,2cm) node [circle, minimum size=1.5cm, inner sep=0pt, draw, anchor=south west] {\Huge $-9$};
\draw (14cm,2cm) node [circle, minimum size=1.5cm, inner sep=0pt, draw, anchor=south west] {\Huge $-8$};
\draw (0cm,0cm) node [circle, minimum size=1.5cm, inner sep=0pt, draw=none, anchor=south west] {\Huge $-7$};
\draw (2cm,0cm) node [circle, minimum size=1.5cm, inner sep=0pt, draw, anchor=south west] {\Huge $-6$};
\draw (4cm,0cm) node [circle, minimum size=1.5cm, inner sep=0pt, draw, anchor=south west] {\Huge $-5$};
\draw (6cm,0cm) node [circle, minimum size=1.5cm, inner sep=0pt, draw, anchor=south west] {\Huge $-4$};
\draw (8cm,0cm) node [circle, minimum size=1.5cm, inner sep=0pt, draw=none, anchor=south west] {\Huge $-3$};
\draw (10cm,0cm) node [circle, minimum size=1.5cm, inner sep=0pt, draw=none, anchor=south west] {\Huge $-2$};
\draw (12cm,0cm) node [circle, minimum size=1.5cm, inner sep=0pt, draw, anchor=south west] {\Huge $-1$};
\draw (14cm,0cm) node [circle, minimum size=1.5cm, inner sep=0pt, draw, anchor=south west] {\Huge $0$};
\draw (0cm,-2cm) node [circle, minimum size=1.5cm, inner sep=0pt, draw=none, anchor=south west] {\Huge $1$};
\draw (2cm,-2cm) node [circle, minimum size=1.5cm, inner sep=0pt, draw=none, anchor=south west] {\Huge $2$};
\draw (4cm,-2cm) node [circle, minimum size=1.5cm, inner sep=0pt, draw, anchor=south west] {\Huge $3$};
\draw (6cm,-2cm) node [circle, minimum size=1.5cm, inner sep=0pt, draw, anchor=south west] {\Huge $4$};
\draw (8cm,-2cm) node [circle, minimum size=1.5cm, inner sep=0pt, draw=none, anchor=south west] {\Huge $5$};
\draw (10cm,-2cm) node [circle, minimum size=1.5cm, inner sep=0pt, draw=none, anchor=south west] {\Huge $6$};
\draw (12cm,-2cm) node [circle, minimum size=1.5cm, inner sep=0pt, draw=none, anchor=south west] {\Huge $7$};
\draw (14cm,-2cm) node [circle, minimum size=1.5cm, inner sep=0pt, draw, anchor=south west] {\Huge $8$};
\draw (0cm,-4cm) node [circle, minimum size=1.5cm, inner sep=0pt, draw=none, anchor=south west] {\Huge $9$};
\draw (2cm,-4cm) node [circle, minimum size=1.5cm, inner sep=0pt, draw=none, anchor=south west] {\Huge $10$};
\draw (4cm,-4cm) node [circle, minimum size=1.5cm, inner sep=0pt, draw=none, anchor=south west] {\Huge $11$};
\draw (6cm,-4cm) node [circle, minimum size=1.5cm, inner sep=0pt, draw=none, anchor=south west] {\Huge $12$};
\draw (8cm,-4cm) node [circle, minimum size=1.5cm, inner sep=0pt, draw=none, anchor=south west] {\Huge $13$};
\draw (10cm,-4cm) node [circle, minimum size=1.5cm, inner sep=0pt, draw=none, anchor=south west] {\Huge $14$};
\draw (12cm,-4cm) node [circle, minimum size=1.5cm, inner sep=0pt, draw=none, anchor=south west] {\Huge $15$};
\draw (14cm,-4cm) node [circle, minimum size=1.5cm, inner sep=0pt, draw, anchor=south west] {\Huge $16$};
\draw (0cm,-6cm) node [circle, minimum size=1.5cm, inner sep=0pt, draw=none, anchor=south west] {\Huge $17$};
\draw (2cm,-6cm) node [circle, minimum size=1.5cm, inner sep=0pt, draw=none, anchor=south west] {\Huge $18$};
\draw (4cm,-6cm) node [circle, minimum size=1.5cm, inner sep=0pt, draw=none, anchor=south west] {\Huge $19$};
\draw (6cm,-6cm) node [circle, minimum size=1.5cm, inner sep=0pt, draw=none, anchor=south west] {\Huge $20$};
\draw (8cm,-6cm) node [circle, minimum size=1.5cm, inner sep=0pt, draw=none, anchor=south west] {\Huge $21$};
\draw (10cm,-6cm) node [circle, minimum size=1.5cm, inner sep=0pt, draw=none, anchor=south west] {\Huge $22$};
\draw (12cm,-6cm) node [circle, minimum size=1.5cm, inner sep=0pt, draw=none, anchor=south west] {\Huge $23$};
\draw (14cm,-6cm) node [circle, minimum size=1.5cm, inner sep=0pt, draw=none, anchor=south west] {\Huge $24$};
\end{tikzpicture}}}\qquad\qquad
\scalebox{0.25}{\begin{tikzpicture}
\draw (0cm,0cm) node [rectangle, minimum size=2cm, inner sep=0pt, draw, anchor=south west] {\Huge$3$};
\draw (2cm,0cm) node [rectangle, minimum size=2cm, inner sep=0pt, draw, anchor=south west] {\Huge$-5$};
\draw (4cm,0cm) node [rectangle, minimum size=2cm, inner sep=0pt, draw, anchor=south west] {\Huge$-13$};
\draw (6cm,0cm) node [rectangle, minimum size=2cm, inner sep=0pt, draw, anchor=south west] {\Huge$-21$};
\draw (8cm,0cm) node [rectangle, minimum size=2cm, inner sep=0pt, draw, anchor=south west] {\Huge$-29$};
\draw (10cm,0cm) node [rectangle, minimum size=2cm, inner sep=0pt, draw, anchor=south west] {\Huge$-37$};
\draw (12cm,0cm) node [rectangle, minimum size=2cm, inner sep=0pt, draw, anchor=south west] {\Huge$-45$};
\draw (14cm,0cm) node [rectangle, minimum size=2cm, inner sep=0pt, draw, anchor=south west] {\Huge$-53$};
\draw (16cm,0cm) node [rectangle, minimum size=2cm, inner sep=0pt, draw, anchor=south west] {\Huge$-61$};
\draw (18cm,0cm) node [rectangle, minimum size=2cm, inner sep=0pt, draw, anchor=south west] {\Huge$-69$};
\draw (20cm,0cm) node [rectangle, minimum size=2cm, inner sep=0pt, draw, anchor=south west] {\Huge$-77$};
\draw (22cm,0cm) node [rectangle, minimum size=2cm, inner sep=0pt, draw, anchor=south west] {\Huge$-85$};
\draw (24cm,0cm) node [rectangle, minimum size=2cm, inner sep=0pt, draw, anchor=south west] {\Huge$-93$};
\draw (0cm,2cm) node [rectangle, minimum size=2cm, inner sep=0pt, draw, anchor=south west] {\Huge$16$};
\draw (2cm,2cm) node [rectangle, minimum size=2cm, inner sep=0pt, draw, anchor=south west] {\Huge$8$};
\draw (4cm,2cm) node [rectangle, minimum size=2cm, inner sep=0pt, draw, anchor=south west] {\Huge$0$};
\draw (6cm,2cm) node [rectangle, minimum size=2cm, inner sep=0pt, draw, anchor=south west] {\Huge$-8$};
\draw (8cm,2cm) node [rectangle, minimum size=2cm, inner sep=0pt, draw, anchor=south west] {\Huge$-16$};
\draw (10cm,2cm) node [rectangle, minimum size=2cm, inner sep=0pt, draw, anchor=south west] {\Huge$-24$};
\draw (12cm,2cm) node [rectangle, minimum size=2cm, inner sep=0pt, draw, anchor=south west] {\Huge$-32$};
\draw (14cm,2cm) node [rectangle, minimum size=2cm, inner sep=0pt, draw, anchor=south west] {\Huge$-40$};
\draw (16cm,2cm) node [rectangle, minimum size=2cm, inner sep=0pt, draw, anchor=south west] {\Huge$-48$};
\draw (18cm,2cm) node [rectangle, minimum size=2cm, inner sep=0pt, draw, anchor=south west] {\Huge$-56$};
\draw (20cm,2cm) node [rectangle, minimum size=2cm, inner sep=0pt, draw, anchor=south west] {\Huge$-64$};
\draw (22cm,2cm) node [rectangle, minimum size=2cm, inner sep=0pt, draw, anchor=south west] {\Huge$-72$};
\draw (24cm,2cm) node [rectangle, minimum size=2cm, inner sep=0pt, draw, anchor=south west] {\Huge$-80$};
\draw (0cm,4cm) node [rectangle, minimum size=2cm, inner sep=0pt, draw, anchor=south west] {\Huge$29$};
\draw (2cm,4cm) node [rectangle, minimum size=2cm, inner sep=0pt, draw, anchor=south west] {\Huge$21$};
\draw (4cm,4cm) node [rectangle, minimum size=2cm, inner sep=0pt, draw, anchor=south west] {\Huge$13$};
\draw (6cm,4cm) node [rectangle, minimum size=2cm, inner sep=0pt, draw, anchor=south west] {\Huge$5$};
\draw (8cm,4cm) node [rectangle, minimum size=2cm, inner sep=0pt, draw, anchor=south west] {\Huge$-3$};
\draw (10cm,4cm) node [rectangle, minimum size=2cm, inner sep=0pt, draw, anchor=south west] {\Huge$-11$};
\draw (12cm,4cm) node [rectangle, minimum size=2cm, inner sep=0pt, draw, anchor=south west] {\Huge$-19$};
\draw (14cm,4cm) node [rectangle, minimum size=2cm, inner sep=0pt, draw, anchor=south west] {\Huge$-27$};
\draw (16cm,4cm) node [rectangle, minimum size=2cm, inner sep=0pt, draw, anchor=south west] {\Huge$-35$};
\draw (18cm,4cm) node [rectangle, minimum size=2cm, inner sep=0pt, draw, anchor=south west] {\Huge$-43$};
\draw (20cm,4cm) node [rectangle, minimum size=2cm, inner sep=0pt, draw, anchor=south west] {\Huge$-51$};
\draw (22cm,4cm) node [rectangle, minimum size=2cm, inner sep=0pt, draw, anchor=south west] {\Huge$-59$};
\draw (24cm,4cm) node [rectangle, minimum size=2cm, inner sep=0pt, draw, anchor=south west] {\Huge$-67$};
\draw (0cm,6cm) node [rectangle, minimum size=2cm, inner sep=0pt, draw, anchor=south west] {\Huge$42$};
\draw (2cm,6cm) node [rectangle, minimum size=2cm, inner sep=0pt, draw, anchor=south west] {\Huge$34$};
\draw (4cm,6cm) node [rectangle, minimum size=2cm, inner sep=0pt, draw, anchor=south west] {\Huge$26$};
\draw (6cm,6cm) node [rectangle, minimum size=2cm, inner sep=0pt, draw, anchor=south west] {\Huge$18$};
\draw (8cm,6cm) node [rectangle, minimum size=2cm, inner sep=0pt, draw, anchor=south west] {\Huge$10$};
\draw (10cm,6cm) node [rectangle, minimum size=2cm, inner sep=0pt, draw, anchor=south west] {\Huge$2$};
\draw (12cm,6cm) node [rectangle, minimum size=2cm, inner sep=0pt, draw, anchor=south west] {\Huge$-6$};
\draw (14cm,6cm) node [rectangle, minimum size=2cm, inner sep=0pt, draw, anchor=south west] {\Huge$-14$};
\draw (16cm,6cm) node [rectangle, minimum size=2cm, inner sep=0pt, draw, anchor=south west] {\Huge$-22$};
\draw (18cm,6cm) node [rectangle, minimum size=2cm, inner sep=0pt, draw, anchor=south west] {\Huge$-30$};
\draw (20cm,6cm) node [rectangle, minimum size=2cm, inner sep=0pt, draw, anchor=south west] {\Huge$-38$};
\draw (22cm,6cm) node [rectangle, minimum size=2cm, inner sep=0pt, draw, anchor=south west] {\Huge$-46$};
\draw (24cm,6cm) node [rectangle, minimum size=2cm, inner sep=0pt, draw, anchor=south west] {\Huge$-54$};
\draw (0cm,8cm) node [rectangle, minimum size=2cm, inner sep=0pt, draw, anchor=south west] {\Huge$55$};
\draw (2cm,8cm) node [rectangle, minimum size=2cm, inner sep=0pt, draw, anchor=south west] {\Huge$47$};
\draw (4cm,8cm) node [rectangle, minimum size=2cm, inner sep=0pt, draw, anchor=south west] {\Huge$39$};
\draw (6cm,8cm) node [rectangle, minimum size=2cm, inner sep=0pt, draw, anchor=south west] {\Huge$31$};
\draw (8cm,8cm) node [rectangle, minimum size=2cm, inner sep=0pt, draw, anchor=south west] {\Huge$23$};
\draw (10cm,8cm) node [rectangle, minimum size=2cm, inner sep=0pt, draw, anchor=south west] {\Huge$15$};
\draw (12cm,8cm) node [rectangle, minimum size=2cm, inner sep=0pt, draw, anchor=south west] {\Huge$7$};
\draw (14cm,8cm) node [rectangle, minimum size=2cm, inner sep=0pt, draw, anchor=south west] {\Huge$-1$};
\draw (16cm,8cm) node [rectangle, minimum size=2cm, inner sep=0pt, draw, anchor=south west] {\Huge$-9$};
\draw (18cm,8cm) node [rectangle, minimum size=2cm, inner sep=0pt, draw, anchor=south west] {\Huge$-17$};
\draw (20cm,8cm) node [rectangle, minimum size=2cm, inner sep=0pt, draw, anchor=south west] {\Huge$-25$};
\draw (22cm,8cm) node [rectangle, minimum size=2cm, inner sep=0pt, draw, anchor=south west] {\Huge$-33$};
\draw (24cm,8cm) node [rectangle, minimum size=2cm, inner sep=0pt, draw, anchor=south west] {\Huge$-41$};
\draw (0cm,10cm) node [rectangle, minimum size=2cm, inner sep=0pt, draw, anchor=south west] {\Huge$68$};
\draw (2cm,10cm) node [rectangle, minimum size=2cm, inner sep=0pt, draw, anchor=south west] {\Huge$60$};
\draw (4cm,10cm) node [rectangle, minimum size=2cm, inner sep=0pt, draw, anchor=south west] {\Huge$52$};
\draw (6cm,10cm) node [rectangle, minimum size=2cm, inner sep=0pt, draw, anchor=south west] {\Huge$44$};
\draw (8cm,10cm) node [rectangle, minimum size=2cm, inner sep=0pt, draw, anchor=south west] {\Huge$36$};
\draw (10cm,10cm) node [rectangle, minimum size=2cm, inner sep=0pt, draw, anchor=south west] {\Huge$28$};
\draw (12cm,10cm) node [rectangle, minimum size=2cm, inner sep=0pt, draw, anchor=south west] {\Huge$20$};
\draw (14cm,10cm) node [rectangle, minimum size=2cm, inner sep=0pt, draw, anchor=south west] {\Huge$12$};
\draw (16cm,10cm) node [rectangle, minimum size=2cm, inner sep=0pt, draw, anchor=south west] {\Huge$4$};
\draw (18cm,10cm) node [rectangle, minimum size=2cm, inner sep=0pt, draw, anchor=south west] {\Huge$-4$};
\draw (20cm,10cm) node [rectangle, minimum size=2cm, inner sep=0pt, draw, anchor=south west] {\Huge$-12$};
\draw (22cm,10cm) node [rectangle, minimum size=2cm, inner sep=0pt, draw, anchor=south west] {\Huge$-20$};
\draw (24cm,10cm) node [rectangle, minimum size=2cm, inner sep=0pt, draw, anchor=south west] {\Huge$-28$};
\draw (0cm,12cm) node [rectangle, minimum size=2cm, inner sep=0pt, draw, anchor=south west] {\Huge$81$};
\draw (2cm,12cm) node [rectangle, minimum size=2cm, inner sep=0pt, draw, anchor=south west] {\Huge$73$};
\draw (4cm,12cm) node [rectangle, minimum size=2cm, inner sep=0pt, draw, anchor=south west] {\Huge$65$};
\draw (6cm,12cm) node [rectangle, minimum size=2cm, inner sep=0pt, draw, anchor=south west] {\Huge$57$};
\draw (8cm,12cm) node [rectangle, minimum size=2cm, inner sep=0pt, draw, anchor=south west] {\Huge$49$};
\draw (10cm,12cm) node [rectangle, minimum size=2cm, inner sep=0pt, draw, anchor=south west] {\Huge$41$};
\draw (12cm,12cm) node [rectangle, minimum size=2cm, inner sep=0pt, draw, anchor=south west] {\Huge$33$};
\draw (14cm,12cm) node [rectangle, minimum size=2cm, inner sep=0pt, draw, anchor=south west] {\Huge$25$};
\draw (16cm,12cm) node [rectangle, minimum size=2cm, inner sep=0pt, draw, anchor=south west] {\Huge$17$};
\draw (18cm,12cm) node [rectangle, minimum size=2cm, inner sep=0pt, draw, anchor=south west] {\Huge$9$};
\draw (20cm,12cm) node [rectangle, minimum size=2cm, inner sep=0pt, draw, anchor=south west] {\Huge$1$};
\draw (22cm,12cm) node [rectangle, minimum size=2cm, inner sep=0pt, draw, anchor=south west] {\Huge$-7$};
\draw (24cm,12cm) node [rectangle, minimum size=2cm, inner sep=0pt, draw, anchor=south west] {\Huge$-15$};
\draw (0cm,14cm) node [rectangle, minimum size=2cm, inner sep=0pt, draw, anchor=south west] {\Huge$94$};
\draw (2cm,14cm) node [rectangle, minimum size=2cm, inner sep=0pt, draw, anchor=south west] {\Huge$86$};
\draw (4cm,14cm) node [rectangle, minimum size=2cm, inner sep=0pt, draw, anchor=south west] {\Huge$78$};
\draw (6cm,14cm) node [rectangle, minimum size=2cm, inner sep=0pt, draw, anchor=south west] {\Huge$70$};
\draw (8cm,14cm) node [rectangle, minimum size=2cm, inner sep=0pt, draw, anchor=south west] {\Huge$62$};
\draw (10cm,14cm) node [rectangle, minimum size=2cm, inner sep=0pt, draw, anchor=south west] {\Huge$54$};
\draw (12cm,14cm) node [rectangle, minimum size=2cm, inner sep=0pt, draw, anchor=south west] {\Huge$46$};
\draw (14cm,14cm) node [rectangle, minimum size=2cm, inner sep=0pt, draw, anchor=south west] {\Huge$38$};
\draw (16cm,14cm) node [rectangle, minimum size=2cm, inner sep=0pt, draw, anchor=south west] {\Huge$30$};
\draw (18cm,14cm) node [rectangle, minimum size=2cm, inner sep=0pt, draw, anchor=south west] {\Huge$22$};
\draw (20cm,14cm) node [rectangle, minimum size=2cm, inner sep=0pt, draw, anchor=south west] {\Huge$14$};
\draw (22cm,14cm) node [rectangle, minimum size=2cm, inner sep=0pt, draw, anchor=south west] {\Huge$6$};
\draw (24cm,14cm) node [rectangle, minimum size=2cm, inner sep=0pt, draw, anchor=south west] {\Huge$-2$};
\draw[line width=4pt](0,0)--(0,4)--(10,4)--(10,6)--(12,6)--(12,8)--(14,8)--(14,10)--(16,10)--(16,12)--(26,12)--(26,16);
\draw[line width=6pt](12,8)--(14,8);
\filldraw[black] (0,0) circle (0.25cm);
\filldraw[black] (12,8) circle (0.25cm);
\filldraw[black] (14,8) circle (0.25cm);
\filldraw[black] (26,16) circle (0.25cm);
\end{tikzpicture}}
\end{center}
\caption{The placement of integers in boxes $(i,j)$ for $8$-flush and $13$-flush abaci for $0\leq i\leq 7$, $0\leq j\leq 12$.  The flush conditions force $-6$ to be a bead and $7$ to be a gap. Because of the symmetry, a lattice path is completely defined by the portion from $(0,0)$ to $(4,6)$.  An example of an antisymmetric abacus and its corresponding lattice path is shown.}
\label{fig:FMS}
\end{figure}

As in type~$A$ case, the $a$-flush condition can be read horizontally and the $b$-flush condition can be read vertically.  Given 
a type~$C$ abacus $A$ that is both $a$-flush and $b$-flush, the dividing lattice path between integers that are beads of $A$ and integers that are gaps of $A$ is a lattice path $\calL(A)$ from $(0,0)$ to $(b,a)$ that has rotational symmetry about the point $\big(\frac{b}{2},\frac{a}{2}\big)$.  
The inherent symmetry implies that we need only consider the (unrestricted) lattice path from $(0,0)$ to $\big(\big\lfloor\frac{b}{2}\big\rfloor,\big\lfloor\frac{a}{2}\big\rfloor\big)$.
\end{proof}

\begin{remark}
The diagonal hook lengths discussed by Ford, Mai, and Sze can be recovered by analyzing the set of positive beads.  An antisymmetric bead-gap pair for a positive bead $x$ corresponds to a diagonal hook of length $2x-1$.  Indeed, we recover the numbers in the lattice of Ford, Mai, and Sze after matching their indexing conventions and applying the transformation $f(x)=2x-1$ to the numbers in our lattice.
\end{remark}

\begin{proof}[Proof of Theorem~\ref{t:qA}]

In type~$A$, the bijection of Anderson \cite[Proposition~1]{Anderson} specializes to a bijection 
\[
\calL:\left\{\begin{tabular}{c}\textup{$n$-flush and $(mn+1)$-flush}\\\textup{abacus diagrams}\end{tabular}\right\}
\longleftrightarrow 
\left\{\begin{tabular}{c}\textup{$N$-$E$ lattice paths}\\\textup{$(0,0)\row(n,n)$}\\\textup{on or above $y=x$}\end{tabular}\right\},
\]
where the abacus diagram is normalized and drawn on runners $0$ through $n-1$.
In type~$C$, the bijection in Proposition~\ref{p:FMS} restricts to the bijection
\[
\calL:\left\{\begin{tabular}{c}\textup{antisymmetric $2n$-flush and}\\\textup{$(2n+1)$-flush abacus diagrams}\end{tabular}\right\}
\longleftrightarrow 
\left\{\begin{tabular}{c}\textup{$N$-$E$ lattice paths}\\\textup{$(0,0)\row(n,n)$}\end{tabular}\right\}.
\]
Let $L$ be a lattice path and let $\maj(L)$ be the standard major index statistic defined earlier. We will show that $\maj_A(\lam)=\maj(L)$ and $2\hspace{.02in}\maj_C(\lam)=\maj(L)$.  The desired result follows from a classical result of MacMahon; see for example \cite[Chapter 1]{haglund}.

We must determine the positions of East steps followed by North steps in $L$.
Since the position of the lowest bead in runner $i$ corresponds to the $i$-th
North step, an East step before the $i$-th North step occurs if the level of the
lowest bead in runner $i$ is less than or equal to the lowest bead in runner
$i-1$, which is exactly the condition that the sequence $x$ has a descent in position $i$.  

Now we must determine the step along $L$ where this North step occurs to see
what its contribution to $\maj_A(\lam)$ or $\maj_C(\lam)$ should be.  A North step always
corresponds to changing runners in the abacus.  An East step corresponds to
walking up the levels in the runner.  So if the weak descent of $x$ occurs in
position $i$ with a bead on level $x_i$, then this corresponds to having
traversed $i$ North steps (in type~$C$, $(i-1)$ North steps) and $(i-x_i)$
East steps, which contributes $2i-x_i$ to $\maj_A(\lam)$ (in type~$C$, it contributes $2(2i-x_i-1)$ to $\maj_C(\lam)$). 
The sum over all descents gives Equations~\eqref{eq:majA} and \eqref{eq:majC}.
\end{proof}

\section{Further questions}
\label{sec:fq}

In this section, we present some directions for future research.

\begin{question}
The dominant regions of the Shi arrangement form a set of representatives for
certain orbits inside the set of all (not necessarily dominant) Shi regions.
For example, in type~$A$ one can label the regions of the Shi arrangement by a
Dyck path (representing some dominant Shi region) together with a permutation
that is a minimal length coset representative for a quotient that is defined by
the choice of Dyck path.

If we instead use a simultaneous core to represent the dominant Shi region in
type~$A$ or type~$C$, what additional data would we need to add as a decoration
in order to parameterize the full set of (not necessarily dominant) Shi regions?
Recent work of M\'esz\'aros \cite{meszaros} contains relevant combinatorics for
type $C$.
\end{question}

\begin{question}
From the perspective of abacus diagrams, the simultaneous cores we have studied
are defined entirely in terms of conditions that have the form ``If a bead
exists at position $i$ then a bead exists at position $f(i)$,'' where $f(i)$ is
the function $i-j$ with $j$ constant.  As we have seen, these ``convexity''
conditions conspire to produce a finite set of abacus diagrams when $j$ is
relatively prime to the number of runners $R$.

It is natural to consider more general types of functions $f(i)$.  For example,
when defining abacus diagrams that correspond to dominant regions of the 
type $B$ or type $D$ Shi arrangements, we must impose
distinct flush conditions depending on the column containing $i$ in the abacus.

Which functions $f$ produce finite sets of abaci?  Is it possible to enumerate
these sets directly from the abacus diagram and the conditions imposed by $f$?
Are there other natural classes of partitions that are defined in terms of
convex conditions on abaci?
\end{question}

\begin{question} 
Our argument that the dominant regions of the $m$-Shi arrangement correspond to
simultaneous core partitions does not generalize to types $B$ and
$D$.  The condition analogous to Lemma~\ref{l:right_descent} for
$s_i$ to be a right descent on the abacus involves non-adjacent columns in
these types.

Starting from the fact that that the set of dominant alcoves in these types
correspond to even $(2n)$-core partitions, it would be natural to look for a
simple criterion on these partitions that selects the subset of minimal or
bounded $m$-Shi alcoves.  The conditions of being $(2mn \pm 1)$-core do not
produce the correct subsets.
\end{question}

\section*{Acknowledgements}

We would like to thank Monica Vazirani for discussions of related topics and the Institute for Computational and Experimental Research in Mathematics for supporting stimulating collaboration.  We thank the referees for suggestions that have improved the clarity of the exposition.

 D.\ Armstrong was partially supported by NSF award DMS-1001825. C.\ R.\ H.\ Hanusa gratefully acknowledges support from PSC-CUNY Research Awards TRADA-43-127 and TRADA-44-168.   B.\ C.\ Jones was supported in part by a CSM Summer Research Grant from James Madison University.


\end{document}